\let\pa\partial  
\let\na\nabla  
\let\eps\varepsilon  
\newcommand{\N}{{\mathbb N}}  
\newcommand{\R}{{\mathbb R}} 
\newcommand{\diver}{\operatorname{div}}
\newtheorem{theorem}{Theorem}   
\newtheorem{lemma}[theorem]{Lemma}
\begin{document}  

\title[Vanishing cross-diffusion limit]{Vanishing cross-diffusion limit in a
Keller--Segel system with additional cross-diffusion}

\author[A. J\"ungel]{Ansgar J\"ungel}
\address{Institute for Analysis and Scientific Computing, Vienna University of  
	Technology, Wiedner Hauptstra\ss e 8--10, 1040 Wien, Austria}
\email{juengel@tuwien.ac.at} 

\author[O. Leingang]{Oliver Leingang}
\address{Institute for Analysis and Scientific Computing, Vienna University of  
	Technology, Wiedner Hauptstra\ss e 8--10, 1040 Wien, Austria}
\email{oliver.leingang@tuwien.ac.at} 

\author[S. Wang]{Shu Wang}
\address{College of Applied Science, Beijing University of Technology, 
Beijing, PR China}
\email{wangshu@bjut.edu.cn} 

\date{\today}

\thanks{The first two authors acknowledge partial support from   
the Austrian Science Fund (FWF), grants P30000, W1245, and F65.
The third author is partially supported by the National Natural Science Foundation 
of China (NSFC), grants 11831003, 11771031, 11531010, and by NSF of Qinghai Province,
grant 2017-ZJ-908.} 

\begin{abstract}
Keller--Segel systems in two and three space dimensions with an additional 
cross-diffusion term in the equation for the chemical concentration are analyzed.
The cross-diffusion term has a stabilizing effect and leads to the global-in-time
existence of weak solutions.
The limit of vanishing cross-diffusion parameter is proved rigorously in the
parabolic-elliptic and parabolic-parabolic cases. When the signal production
is sublinear, the existence of global-in-time weak solutions as well as the
convergence of the solutions to those of the classical parabolic-elliptic
Keller--Segel equations are proved. The proof is based on a reformulation of
the equations eliminating the additional cross-diffusion term but making the
equation for the cell density quasilinear. For superlinear signal production terms,
convergence rates in the cross-diffusion parameter are proved for local-in-time
smooth solutions (since finite-time blow up is possible). The proof is based
on careful $H^s(\Omega)$ estimates and a variant of the Gronwall lemma.
Numerical experiments in two space dimensions
illustrate the theoretical results and quantify the shape of the
cell aggregation bumps as a function of the cross-diffusion parameter.
\end{abstract}

\keywords{Keller--Segel model, asymptotic analysis, vanishing cross-diffusion limit,
entropy method, higher-order estimates, numerical simulations.}  
 
\subjclass[2000]{35B40, 35K51, 35Q92, 92C17.}

\maketitle


\section{Introduction}

Chemotaxis describes the directed movement of cells in response to chemical gradients
and may be modeled by the (Patlak--) Keller--Segel equations \cite{KeSe70,Pat53}. 
The aggregation
of cells induced by the chemical concentration is counter-balanced by cell diffusion. 
If the cell density is sufficiently large, the chemical interaction dominates diffusion
and results in a blow-up of the cell density. However, a single point blow-up is not very realistic from a biological view point,
due to the finite size of the cells. Therefore, chemotaxis models
that avoid blow-up were suggested in the literature. 
Possible approaches are bounded chemotaxis sensibilities
to avoid over-crowding \cite{BDD06,DiRo08}, degenerate cell diffusion 
\cite{CaCa06,Kow05,KoSz08}, death-growth terms \cite{BHLM96,Win08},
or additional cross-diffusion \cite{CHJ12,HiJu11}. In this paper, we continue
our study of the Keller--Segel system with additional cross-diffusion, which allows
for global weak solutions \cite{CHJ12,HiJu11}.
The question how well the solutions approximate those from the
original Keller--Segel system remained open. Here, we will answer this
question by performing rigorously the vanishing cross-diffusion limit and
giving an estimate for the difference of the respective solutions.

The equations for the cell density $\rho_\delta(x,t)$ and the chemical concentration
$c_\delta(x,t)$ are given by
\begin{equation}\label{1.eq}
  \pa_t\rho_\delta = \diver(\na\rho_\delta-\rho_\delta\na c_\delta), \quad 
	\eps\pa_t c_\delta = \Delta c_\delta + \delta\Delta\rho_\delta 
	- c_\delta + \rho_\delta^\alpha
	\quad\mbox{in }\Omega,\ t>0,
\end{equation}
subject to the no-flux and initial conditions
\begin{equation}\label{1.bic}
  \na\rho_\delta\cdot\nu = \na c_\delta\cdot\nu = 0\quad\mbox{on }\pa\Omega,\ t>0, \quad
	\rho_\delta(0)=\rho^0,\ \eps c_\delta(0)=\eps c^0\quad\mbox{in }\Omega,
\end{equation}
where $\Omega\subset\R^d$ ($d=2,3$) is a bounded domain,
$\nu$ is the exterior unit normal vector of $\pa\Omega$,
$\delta>0$ describes the strength of the additional cross-diffusion, and 
the term $\rho_\delta^\alpha$ with $\alpha>0$ is the nonlinear signal production.
The case $\eps=1$ is called the parabolic-parabolic case, while $\eps=0$
refers to the parabolic-elliptic case. 
The existence of global weak solutions to \eqref{1.eq}--\eqref{1.bic}
was proved in \cite{HiJu11} in two space dimensions and in \cite{CHJ12} 
in three space dimensions (with degenerate diffusion). 

Setting $\delta=0$, we obtain the Keller--Segel system with nonlinear signal
production. In the classical Keller--Segel model, the signal production is assumed 
to be linear, $\alpha=1$. In order to deal
with the three-dimensional case, we need sublinear signal productions, 
$\alpha<1$.
In fact, it is shown in \cite{Win18} that $\alpha=2/d$ is the critical
value for global existence versus finite-time blow-up 
in a slightly modified Keller--Segel system with 
$\delta=0$. We show that also for $\delta>0$, the condition $\alpha<2/d$
guarantees the global existence of weak solutions, while numerical results
indicate finite-time blow-up if $\alpha>2/d$.

We are interested in the limit $\delta\to 0$ in \eqref{1.bic} leading to the 
Keller--Segel equations
\begin{equation}\label{1.ks}
  \pa_t\rho = \diver(\na\rho-\rho\na c), \quad 
	\eps\pa_t c = \Delta c - c + \rho^\alpha\quad\mbox{in }\Omega,\ t>0,
\end{equation}
with the initial and boundary conditions \eqref{1.bic}. 
We refer to the reviews \cite{BBTW15,HiPa09} for references
concerning local and global solvability and results for variants of this
model. In the following, we recall only some effects related to the blow-up behavior.

In the parabolic-elliptic case, a dichotomy arises for \eqref{1.ks} 
in two space dimensions: If the initial mass $M:=\|\rho^0\|_{L^1(\Omega)}$ is smaller 
than $8\pi$, the solutions are global in time, while they blow up in finite time
if $M>8\pi$ and the second moment of the initial datum is finite (see, e.g.,
\cite{Bla12}). The condition on the second moment implies that the initial
density is highly concentrated around some point. It is necessary in the sense that
there exists a set of initial data with total mass larger than $8\pi$ such that
the corresponding solutions are global \cite{BaCa15}.

In the parabolic-parabolic case, again in two space dimensions
and with finite second moment, the solutions exist globally in time of 
$M<8\pi$ \cite{CaCa08}. However, in contrast to the parabolic-elliptic case, the
threshold value for $M$ is less precise, and solutions with large mass can exist
globally. In dimensions $d\ge 3$, a related critical phenomenon occurs: The solutions
exist globally in time if $\|\rho^0\|_{L^{d/2}(\Omega)}$ is sufficiently small,
but they blow up in finite time if the total mass is large compared to the
second moment \cite{CPZ04}.

In this paper, we prove two results. The first one is the convergence of the solutions
to the parabolic-elliptic model \eqref{1.eq} with $\eps=0$ to a solution
to the parabolic-elliptic Keller--Segel system \eqref{1.ks} with $\eps=0$.
Since we impose a restriction on the parameter $\alpha$ as mentioned above,
this result holds globally in time. We set $Q_T=\Omega\times(0,T)$.

\begin{theorem}[Convergence for the parabolic-elliptic model]\label{thm.pe}
Let $\Omega\subset\R^d$ $(d=2,3)$ be bounded with $\pa\Omega\in C^{1,1}$,
$T>0$, $\delta>0$, $\eps=0$, and $0\le\rho^0\in L^\infty(\Omega)$. 
If $\alpha<2/d$, there exists
a weak solution $(\rho_\delta,c_\delta)\in L^2(0,T;H^1(\Omega))^2$ to 
\eqref{1.eq}-\eqref{1.bic} satisfying
$$
  \pa_t\rho_\delta\in L^2(0,T;H^1(\Omega)'), \quad 
	\rho_\delta\in L^{\infty}(0,T;L^3(\Omega)), \quad
	c_\delta+\delta\rho_\delta\in L^\infty(0,T;W^{1,p}(\Omega)),
$$
for any $p<\infty$. Furthermore, as $\delta\to 0$,
\begin{align*}
  \rho_\delta\to\rho &\quad\mbox{strongly in }L^2(Q_T), \\
	\na\rho_\delta\rightharpoonup\na\rho &\quad\mbox{weakly in }L^2(Q_T), \\
	c_\delta+\delta\rho_\delta\rightharpoonup^* c &\quad\mbox{weakly* in }
	L^\infty(0,T;W^{1,p}(\Omega)), \ p<\infty,
\end{align*}
and $(\rho,c)$ solves \eqref{1.bic}--\eqref{1.ks}, and it holds that
$\rho$, $c\in L^\infty(0,T;L^{\infty}(\Omega))$.
\end{theorem}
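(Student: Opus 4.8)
The plan is to first rewrite \eqref{1.eq}--\eqref{1.bic} with $\eps=0$ in the quasilinear form announced in the abstract: setting $w_\delta:=c_\delta+\delta\rho_\delta$, the second equation becomes the elliptic problem
$$
  -\Delta w_\delta + w_\delta = \delta\rho_\delta + \rho_\delta^\alpha,\qquad \na w_\delta\cdot\nu=0\ \text{ on }\pa\Omega,
$$
and, since $\na c_\delta=\na w_\delta-\delta\na\rho_\delta$, the cell equation becomes
$$
  \pa_t\rho_\delta = \diver\big((1+\delta\rho_\delta)\na\rho_\delta - \rho_\delta\na w_\delta\big).
$$
The cross-diffusion has disappeared at the price of the $\rho_\delta$-dependent diffusivity $1+\delta\rho_\delta$; the gain is that the extra flux $\delta\rho_\delta\na\rho_\delta=\tfrac\delta2\na\rho_\delta^2$ supplies additional dissipation that will be used to control the new source $\delta\rho_\delta$. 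Existence of solutions to this reformulated problem would be obtained by a standard regularization: one truncates the diffusivity to $1+\delta\rho_\delta^+$ (and the source terms accordingly) to keep uniform parabolicity, solves the resulting quasilinear parabolic equation with the elliptic relation $w_\delta=(-\Delta+1)^{-1}(\delta\rho_\delta^++(\rho_\delta^+)^\alpha)$ by a Galerkin or Schauder argument, and then removes the truncation once nonnegativity of $\rho_\delta$ is established. The bulk of the work is to prove a priori bounds that are uniform in $\delta$ (and in the regularization), which is what I describe next.

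The first bounds are elementary: testing the cell equation with $1$ gives mass conservation $\|\rho_\delta(t)\|_{L^1(\Omega)}=M:=\|\rho^0\|_{L^1(\Omega)}$; testing with the negative part of $\rho_\delta$ gives $\rho_\delta\ge0$; and the maximum principle applied to $-\Delta w_\delta+w_\delta=\delta\rho_\delta+\rho_\delta^\alpha\ge0$ gives $w_\delta\ge0$. The key estimate is an $L^p$ bound for $p\ge2$: testing the cell equation with $\rho_\delta^{p-1}$, using $\Delta w_\delta=w_\delta-\delta\rho_\delta-\rho_\delta^\alpha$ and discarding the favourable term $-\tfrac{p-1}{p}\int_\Omega\rho_\delta^p w_\delta\le0$, yields
$$
  \frac1p\frac{d}{dt}\int_\Omega\rho_\delta^p\,dx
  + \frac{4(p-1)}{p^2}\int_\Omega|\na\rho_\delta^{p/2}|^2\,dx
  + \frac{4(p-1)\delta}{(p+1)^2}\int_\Omega|\na\rho_\delta^{(p+1)/2}|^2\,dx
  \le \frac{(p-1)\delta}{p}\int_\Omega\rho_\delta^{p+1}\,dx + \frac{p-1}{p}\int_\Omega\rho_\delta^{p+\alpha}\,dx .
$$
The term $\int_\Omega\rho_\delta^{p+\alpha}$ is the classical Keller--Segel term: by the Gagliardo--Nirenberg inequality, with the low-order factor controlled by the conserved mass $M$, it is bounded by $\eta\|\na\rho_\delta^{p/2}\|_{L^2(\Omega)}^2+C(p,M)$, the interpolation exponent on the gradient being admissible precisely because $\alpha<2/d$. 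The term $\delta\int_\Omega\rho_\delta^{p+1}=\delta\|\rho_\delta^{(p+1)/2}\|_{L^2(\Omega)}^2$ is the genuinely new one; it is supercritical and cannot be absorbed by the $\na\rho_\delta^{p/2}$-dissipation alone, but Gagliardo--Nirenberg bounds it by $C\delta\|\na\rho_\delta^{(p+1)/2}\|_{L^2(\Omega)}^{2\theta}M^{\gamma}$ with $\theta=pd/(pd+2)<1$, and a Young inequality — with the free parameter chosen proportional to $\delta$ — absorbs it into the $\delta$-dissipation term at the cost of a remainder that is $O(\delta)$, hence bounded for $\delta\le\delta_0$. Integrating in time, using $\rho^0\in L^\infty(\Omega)\subset L^p(\Omega)$, gives for every $p<\infty$ and every $T>0$ bounds for $\rho_\delta$ in $L^\infty(0,T;L^p(\Omega))$ and for $\na\rho_\delta^{p/2}$ in $L^2(Q_T)$, uniform in $\delta$; in particular $\rho_\delta\in L^\infty(0,T;L^3(\Omega))$ and (case $p=2$) $\na\rho_\delta\in L^2(Q_T)$. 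Elliptic $L^q$-regularity for $-\Delta w_\delta+w_\delta=\delta\rho_\delta+\rho_\delta^\alpha$ (here $\pa\Omega\in C^{1,1}$ is used) then gives $w_\delta=c_\delta+\delta\rho_\delta\in L^\infty(0,T;W^{2,q}(\Omega))\hookrightarrow L^\infty(0,T;W^{1,p}(\Omega))$ with $\na w_\delta\in L^\infty(Q_T)$, uniformly in $\delta$; and the cell equation then bounds $\pa_t\rho_\delta$ in $L^2(0,T;H^1(\Omega)')$ uniformly, the only $\delta$-dependent contribution $\tfrac\delta2\na\rho_\delta^2$ being $O(\delta)$ in $L^2(Q_T)$ (note $\rho_\delta\na\rho_\delta=\tfrac12\na\rho_\delta^2\in L^2(Q_T)$ from the $p=4$ estimate).

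For the limit $\delta\to0$, the Aubin--Lions lemma (using $\rho_\delta$ bounded in $L^2(0,T;H^1(\Omega))$ and $\pa_t\rho_\delta$ bounded in $L^2(0,T;H^1(\Omega)')$) gives relative compactness of $\{\rho_\delta\}$ in $L^2(Q_T)$; along a subsequence $\rho_\delta\to\rho$ strongly in $L^2(Q_T)$ and a.e., $\na\rho_\delta\rightharpoonup\na\rho$ weakly in $L^2(Q_T)$, and $w_\delta\rightharpoonup^* c$ weakly* in $L^\infty(0,T;W^{1,p}(\Omega))$ for every $p<\infty$. Since $\delta\rho_\delta\to0$ in $L^2(Q_T)$ and, by a.e.\ convergence together with the uniform $L^p$-bounds and $\alpha<1$, $\rho_\delta^\alpha\to\rho^\alpha$ strongly in $L^2(Q_T)$, the elliptic equation passes to the limit: $-\Delta c+c=\rho^\alpha$. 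In the weak formulation of the cell equation the only nontrivial term is $\int_{Q_T}\rho_\delta\na w_\delta\cdot\na\phi=\int_{Q_T}(\rho_\delta\na\phi)\cdot\na w_\delta$, which converges because $\rho_\delta\na\phi\to\rho\na\phi$ strongly in $L^2(Q_T)$ while $\na w_\delta\rightharpoonup\na c$ weakly in $L^2(Q_T)$; the term $\tfrac\delta2\na\rho_\delta^2$ vanishes in $L^2(Q_T)$. Hence $\pa_t\rho=\diver(\na\rho-\rho\na c)$, and the initial condition is recovered from $\rho_\delta\rightharpoonup\rho$ in $C^0([0,T];L^2(\Omega))$ with the weak topology. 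Thus $(\rho,c)$ solves \eqref{1.bic}--\eqref{1.ks} with $\eps=0$. Finally, for the $L^\infty$-regularity of the limit, since $\na c\in L^\infty(Q_T)$ and $\Delta c=c-\rho^\alpha$, the cell equation reads $\pa_t\rho-\Delta\rho+\na c\cdot\na\rho=\rho^{1+\alpha}-\rho c$, a linear parabolic equation with bounded drift and right-hand side in $L^\infty(0,T;L^q(\Omega))$ for all $q<\infty$ (recall $1+\alpha<2$); a Moser--Alikakos iteration (or parabolic $L^p$--$L^q$ smoothing) gives $\rho\in L^\infty(Q_T)$, and then elliptic regularity for $-\Delta c+c=\rho^\alpha$ gives $c\in L^\infty(Q_T)$.

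I expect the main obstacle to be the $\delta$-uniformity of the $L^p$ estimate, namely the control of the new term $\delta\int_\Omega\rho_\delta^{p+1}$: it is supercritical in the sense that it is not dominated by the $\na\rho_\delta^{p/2}$-dissipation available in the $\delta=0$ model, and the argument only closes because the reformulation produces exactly the extra dissipation $\delta\|\na\rho_\delta^{(p+1)/2}\|_{L^2(\Omega)}^2$; keeping track of the $\delta$-dependence in the Gagliardo--Nirenberg and Young steps so that the remainder is $O(\delta)$ rather than a negative power of $\delta$ is the delicate point. A secondary difficulty is the rigorous existence proof for the quasilinear reformulated problem, which requires the truncation of the diffusivity and its removal via the nonnegativity of $\rho_\delta$ and the uniform bounds above.
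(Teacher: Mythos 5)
Your overall strategy coincides with the paper's: the same change of unknown $v_\delta=c_\delta+\delta\rho_\delta$ leading to \eqref{1.rhov}, the same observation that the extra dissipation $\delta\int_\Omega|\na\rho_\delta^{(p+1)/2}|^2dx$ generated by the quasilinear diffusivity is what absorbs the new source $\delta\int_\Omega\rho_\delta^{p+1}dx$, and the same compactness scheme (Aubin--Lions for $\rho_\delta$, weak-$*$ convergence for $v_\delta$, strong convergence of $\rho_\delta^\alpha$, $L^\infty$ regularity of the limit). Your handling of the $L^p$ estimates is a genuine variant: you interpolate both critical terms directly against the conserved mass, i.e.\ you use Gagliardo--Nirenberg with the low-order quantity $\|\rho_\delta^{p/2}\|_{L^{2/p}(\Omega)}^{2/p}=\|\rho_\delta\|_{L^1(\Omega)}$, which makes the admissibility condition exactly $\alpha<2/d$ for every $p$ and yields $L^\infty(0,T;L^p(\Omega))$ bounds for all $p<\infty$ in one stroke. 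The paper instead interpolates against $\|\rho_\delta^{p/2}\|_{L^1(\Omega)}$ (condition $\alpha<p/d$) and therefore needs the logarithmic entropy \eqref{1.H1} to start a bootstrap $L^{3/2}\to L^2\to L^3$; your route avoids \eqref{1.H1}, and hence the delicate test function $\log\rho_\delta$, altogether, at the price of invoking Gagliardo--Nirenberg with the sublinear exponent $2/p<1$, which should be stated and justified explicitly. (Also, the Young step needs no $\delta$-dependent free parameter: the factor $\delta$ multiplies both the term to be absorbed and the dissipation, so a fixed small parameter already leaves an $O(\delta)$ remainder.)

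Two points need repair. First, the existence construction is where the real technical work of the paper lies, and your sketch does not close: truncating the diffusivity as $1+\delta\rho_\delta^+$ does not make the problem uniformly parabolic from above, so you must also truncate at some level $K$ and then remove $K$, which requires an a priori $L^\infty(\Omega)$ bound on the approximate $\rho_\delta$ that your estimates (uniform only in $L^p$ for finite $p$) do not supply. The paper circumvents this with the exponential change of variables $\rho=\exp(w/\delta)$ combined with an implicit Euler discretization and the fourth-order elliptic regularization \eqref{2.wrho}; this produces bounded $H^2$ approximations for which the test functions $\rho^{p-1}$ (and $\log\rho$) are admissible and positivity is automatic. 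You should either adopt such a scheme or prove an $L^\infty$ bound at the approximate level. Second, the theorem asserts convergence of the family $\rho_\delta$ as $\delta\to0$, not merely of a subsequence; your argument ends with subsequence extraction. The missing ingredient is uniqueness of the limit problem, which the paper deduces in its Step 5 precisely from the bounds $\rho\in L^\infty(Q_T)$ and $\na c\in L^\infty(Q_T)$ that you do establish at the end --- so the fix is short, but it must be said.
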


The idea of the proof is to reformulate \eqref{1.eq} via 
$v_\delta:=c_\delta+\delta\rho_\delta$ as the system
\begin{equation}\label{1.rhov}
  \pa_t\rho_\delta = \diver((1+\delta\rho_\delta)\na\rho_\delta
	-\rho_\delta\na v_\delta), \quad
	-\Delta v_\delta+v_\delta = \delta \rho_\delta + \rho_\delta^\alpha
	\quad\mbox{in }\Omega,\ t>0,
\end{equation}
together with the initial and boundary conditions
\begin{equation}\label{1.bic2}
  \na\rho_\delta\cdot\nu = \na v_\delta\cdot\nu = 0\quad\mbox{in }\pa\Omega, \quad
	\rho_\delta(0)=\rho^0\quad\mbox{in }\Omega.
\end{equation}
This reformulation was already used in \cite{HiJu11} to prove the existence of
weak solutions in the two-dimension case with $\alpha=1$. 
It transforms the asymptotically singular problem
to a quasilinear parabolic equation, thus simplifying
considerably the asymptotic limit problem. Still, we need estimates uniform in
$\delta$ to apply compactness arguments. For this, we use the ``entropy''
functional $H_1(\rho_\delta)=\int_\Omega\rho_\delta(\log\rho_\delta-1)dx$:
\begin{equation}\label{1.H1}
  \frac{dH_1}{dt} + 4\int_\Omega|\na\rho_\delta^{1/2}|^2 dx
	+ \delta\int_\Omega|\na\rho_\delta|^2 dx 
	\le \int_\Omega(\delta\rho_\delta^2 + \rho_\delta^{\alpha+1})dx.
\end{equation}
By the Gagliardo--Nirenberg inequality, the first term on the right-hand side
can be estimated as (see Section \ref{sec.pe} for the proof)
$$
  \delta\int_\Omega\rho_\delta^2 dx 
	\le \frac{\delta}{2}\|\na\rho_\delta\|_{L^2(\Omega)}^2
	+ C\|\rho_\delta\|_{L^1(\Omega)}^2.
$$
The first term on the right-hand side is absorbed by the left-hand side of
\eqref{1.H1} and the second term is bounded since the total mass is conserved.
To estimate the second term on the right-hand side of \eqref{1.H1}, we need
another ``entropy'' functional $H_p(\rho)=\int_\Omega\rho_\delta^p dx$ with
$p=2$ or $p=3$, leading to
\begin{equation}\label{1.Hp}
  \frac{dH_p}{dt} + \int_\Omega|\na\rho_\delta^{p/2}|^2 dx
	+ \delta\int_\Omega|\na\rho_\delta^{(p+1)/2}|^2 dx
	\le C\|\rho_\delta\|_{L^{(p+1)/2}(\Omega)}^{(p+1)/2}
	+ C\|\rho_\delta^{p/2}\|_{L^1(\Omega)}^{\beta(p)},
\end{equation}
where $\beta(p)\ge 2$ is some function depending on $p$. If $p=2$, the last term is the 
total mass which is bounded uniformly in time. Moreover, the estimate for 
$\rho_\delta^{1/2}$ in $H^1(\Omega)$ from \eqref{1.H1}
implies that $\rho_\delta$ is bounded
in $W^{1,1}(\Omega)\hookrightarrow L^{3/2}(\Omega)$ such that the first term on the
right-hand side of \eqref{1.Hp} is uniformly bounded as well. Higher-order
integrability is then obtained for $p=3$. 

Clearly, these arguments are formal. In particular, the estimate for $H_1$
requires the test function $\log\rho_\delta$ in \eqref{1.eq} which may be
not defined if $\rho_\delta=0$. 
Therefore, we consider an implicit Euler discretization in time
with parameter $\tau>0$ and an elliptic regularization in space with parameter
$\eta>0$ to prove first the existence of approximate
weak solutions with strictly positive $\rho_\delta$. This is done by using
the entropy method of \cite{HiJu11}. The approximate solutions also satisfy
the $\delta$-uniform bounds, and they hold true when passing to the
limit $(\eta,\tau)\to 0$. Then the limit $\delta\to 0$ can be performed by
applying the Aubin--Lions lemma and weak compactness arguments.

The second result is concerned with the derivation of a convergence rate
both in the parabolic-parabolic and parabolic-elliptic case.
For $\alpha\ge 1$, we cannot generally expect global solutions. In this case, it
is natural to consider local solutions. The technique requires smooth solutions
so that we need some regularity assumptions on the initial data.

\begin{theorem}[Convergence rates]\label{thm.pp}
Let $\Omega\subset\R^d$ $(d\le 3)$ be a bounded domain with smooth boundary
and let $(\rho^0,c^0)\in (W^{1,p}(\Omega))^2$ for $p>d$ if $\eps=1$ and
$\rho^0\in C^{2+\gamma}(\overline\Omega)$ for some $\gamma\in(0,1)$ if $\eps=0$. 
Furthermore,
let $\alpha=1$ or $\alpha\ge 2$ and let $(\rho_\delta,c_\delta)$ and $(\rho,c)$
be (weak) solutions to \eqref{1.eq} and \eqref{1.ks}, \eqref{1.bic}, respectively,
with the same initial data.
Then these solutions are smooth locally in time and
there exist constants $C>0$ and $\delta_0>0$
such that for all $0<\delta<\delta_0$ and $\lambda>0$,
\begin{equation}\label{1.diff}
  \|(\rho_\delta-\rho,c_\delta-c)\|_{L^\infty(0,T;H^2(\Omega))}
	\le C\delta^{1-\lambda}.
\end{equation}
\end{theorem}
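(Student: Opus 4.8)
The plan is to estimate the differences $R_\delta := \rho_\delta - \rho$ and $C_\delta := c_\delta - c$ (and in the parabolic-elliptic case just $R_\delta$, since $c_\delta$ and $c$ are then determined by elliptic problems) in $H^s(\Omega)$ for $s$ large enough that $H^s(\Omega) \hookrightarrow W^{1,\infty}(\Omega)$ — concretely $s = 2$ is used in the statement, but one may work with a slightly higher Sobolev index and interpolate down at the end. First I would establish, using the assumed regularity of the initial data together with standard parabolic theory (Amann-type results for the quasilinear system \eqref{1.rhov} when $\eps=0$, and maximal regularity / bootstrapping for the semilinear parabolic-parabolic system when $\eps=1$), that both solutions exist and are smooth on a common time interval $[0,T]$ with $\delta$-uniform bounds in $L^\infty(0,T;H^{s}(\Omega))$; here the hypotheses $\alpha=1$ or $\alpha\ge 2$ enter so that $\rho\mapsto\rho^\alpha$ is $C^2$ (one must check the solution stays away from trouble, e.g. $\rho_\delta,\rho\ge 0$ and bounded on $[0,T]$). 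This uniform-bound step is where most of the technical work sits, and it is the main obstacle: one needs the higher-order estimates for \eqref{1.eq} to be uniform in $\delta$ on a time interval that does not shrink as $\delta\to 0$, which requires carefully tracking how the $\delta\Delta\rho_\delta$ term (equivalently the quasilinear coefficient $1+\delta\rho_\delta$ in \eqref{1.rhov}) contributes to the energy-type estimates.

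Next, I would write the equations for the differences. Subtracting \eqref{1.ks} from \eqref{1.eq} gives, schematically,
\begin{align*}
  \pa_t R_\delta &= \Delta R_\delta - \diver(R_\delta\na c_\delta + \rho\na C_\delta),\\
  \eps\pa_t C_\delta &= \Delta C_\delta + \delta\Delta\rho_\delta - C_\delta + (\rho_\delta^\alpha - \rho^\alpha).
\end{align*}
The crucial observation is that the only genuinely ``new'' source term is $\delta\Delta\rho_\delta$ in the second equation; everything else is either linear in $(R_\delta,C_\delta)$ or a difference $\rho_\delta^\alpha-\rho^\alpha$ that, by the mean value theorem and the uniform $L^\infty$ bounds, is controlled by $C\|R_\delta\|$. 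I would then apply $\Delta$ (or a suitable combination of derivatives up to order $2$) to both equations, test with $\Delta R_\delta$ and $\Delta C_\delta$ respectively, and integrate by parts, using the no-flux boundary conditions. The nonlinear transport terms $\diver(R_\delta\na c_\delta)$ and $\diver(\rho\na C_\delta)$ are handled by the Moser-type / Gagliardo-Nirenberg product estimates in $H^s$, absorbing top-order derivatives via Young's inequality into the dissipative terms $\|\na\Delta R_\delta\|_{L^2}^2$ and $\|\na\Delta C_\delta\|_{L^2}^2$ (and, when $\eps=1$, $\|\Delta C_\delta\|_{L^2}^2$ from the $-C_\delta$ term); the $\delta$-uniform $H^s$ bounds on $\rho_\delta,\rho,c_\delta,c$ make all the resulting constants independent of $\delta$.

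This yields a differential inequality of the form
\begin{equation*}
  \frac{d}{dt}\Big(\|R_\delta\|_{H^2}^2 + \eps\|C_\delta\|_{H^2}^2\Big)
  \le C\Big(\|R_\delta\|_{H^2}^2 + \|C_\delta\|_{H^2}^2\Big) + C\delta^2,
\end{equation*}
where the $C\delta^2$ comes from the term $\delta\Delta\rho_\delta$ paired against $\Delta C_\delta$ (or, in the parabolic-elliptic case, from elliptic regularity applied to $-\Delta C_\delta + C_\delta = \delta\rho_\delta + (\rho_\delta^\alpha-\rho^\alpha)$, which gives $\|C_\delta\|_{H^2}\le C\delta\|\rho_\delta\|_{L^2} + C\|R_\delta\|_{L^2}$, feeding back into the $R_\delta$ estimate). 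Since the initial data coincide, $R_\delta(0)=0$ and $C_\delta(0)=0$, so the classical Gronwall lemma would give $\|(R_\delta,C_\delta)\|_{L^\infty(0,T;H^2)}^2 \le C\delta^2 e^{CT}$, i.e.\ an $O(\delta)$ rate. To obtain the slightly weaker but more robust bound $C\delta^{1-\lambda}$ claimed in \eqref{1.diff} — which is what one gets if the constant $C$ in the Gronwall step itself depends mildly on negative powers of $\delta$ through the uniform bounds, or if one only has $\|\delta\Delta\rho_\delta\|\le C\delta^{1-\lambda}$ rather than $C\delta$ — I would invoke the ``variant of the Gronwall lemma'' advertised in the abstract: a version handling inequalities like $y'\le C y + C\delta^{2-2\lambda}$ together with a continuation/bootstrap argument showing the estimate persists on all of $[0,T]$ for $\delta<\delta_0$. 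Finally, interpolating between the $H^s$ estimate and the lower-order $L^2$-type control recovers the stated $H^2$ bound, completing the proof.
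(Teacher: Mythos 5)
Your overall strategy---energy estimates for the differences plus a Gronwall argument---is the right family of ideas, and your difference equations and the elliptic-regularity treatment of the parabolic--elliptic case are correct. But there is a genuine gap at the step you yourself flag as ``where most of the technical work sits'': you \emph{assume} $\delta$-uniform bounds for $(\rho_\delta,c_\delta)$ in $L^\infty(0,T;H^s(\Omega))$ on a time interval that does not shrink as $\delta\to0$, and you never indicate how to obtain them. Proving such uniform higher-order bounds for the cross-diffusion system directly is essentially as hard as the theorem itself (the local existence time and the norms produced by Amann's theorem could a priori degenerate with $\delta$). The paper's proof is designed precisely to avoid this: only the limit solution $(\rho,c)$ is assumed smooth and bounded, while $\rho_\delta$ is written as $\rho+\rho_R$ everywhere. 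The price is that the resulting energy inequality is \emph{not} linear in $\Gamma(t)=\|(\rho_R,c_R)\|_{H^2}^2$; it contains terms $\Gamma^{\max\{2,\alpha\}}$ and $\Gamma(t)G(t)$ coming from products such as $\|\rho_R\|_{H^2}^2\|\na\Delta c_R\|_{L^2}^2$. The nonlinear Gronwall lemma (Lemma \ref{lem.gronwall}) then runs a continuation/contradiction argument that \emph{simultaneously} establishes the smallness of $\rho_R$ (hence the needed boundedness of $\rho_\delta$) and the convergence rate. Your plan, by contrast, would deliver a clean linear inequality and hence an $O(\delta)$ rate---a stronger conclusion than the theorem claims---which is a signal that the hard part has been assumed away; the exponent $1-\lambda$ is not an artifact of ``constants depending mildly on $\delta$'' but is intrinsic to the bootstrap in Lemma \ref{lem.gronwall}, where one needs $\nu-\eps>0$ strictly to absorb the $\delta^\beta(\Gamma+\Gamma^\gamma)\int_0^tG\,ds$ term.

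A secondary inaccuracy: the hypothesis $\alpha=1$ or $\alpha\ge2$ does not enter to make $s\mapsto s^\alpha$ a $C^2$ map. It is needed (i) to estimate the term $(\rho+\rho_R)^{\alpha-1}-\rho^{\alpha-1}$ arising after one differentiation of $\rho_\delta^\alpha-\rho^\alpha$, via H\"older continuity of $s\mapsto s^{\alpha-1}$, and (ii) more importantly, because the resulting contribution $\Gamma(t)^{\alpha-1}$ on the right-hand side can be dominated by $\Gamma+\Gamma^\alpha$ only when $\alpha-1\ge1$ (or is absent when $\alpha=1$); for $1<\alpha<2$ the term $\Gamma^{\alpha-1}$ is \emph{larger} than $\Gamma$ for small $\Gamma$ and would destroy the rate. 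If you repair the first gap by adopting the nonlinear-Gronwall structure, this constraint on $\alpha$ will appear naturally in your estimates as well.
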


The theorem is proved as in, e.g., \cite{HsWa06} by deriving carefully 
$H^s(\Omega)$ estimates for the difference
$(\rho_R,c_R):=(\rho_\delta-\rho,c_\delta-c)$.
The index $s\in\N$ is chosen such that we obtain $L^\infty(\Omega)$
estimates which are needed to handle the nonlinearities. 
If $\eps=1$, we introduce the functions
$$
  \Gamma(t) = \|(\rho_R,c_R)(t)\|_{H^2(\Omega)}^2, \quad
	G(t) = \|(\rho_R,c_R)(t)\|_{H^2(\Omega)}^2 
	+ \|\na\Delta(\rho_R,c_R)(t)\|_{L^2(\Omega)}^2.
$$
The aim is to prove the inequality
$$
  \Gamma(t) + C_1\int_0^t G(s)ds 
	\le C_2\int_0^t(\Gamma(s)+\Gamma(s)^{\max\{2,\alpha\}})ds
	+ C_2\int_0^t \Gamma(s)G(s)ds + C_2\delta^2,
$$
where $C_1$, $C_2>0$ are constants independent of $\delta$. 
This inequality allows us to apply
a variant of Gronwall's lemma (see Lemma \ref{lem.gronwall} in the Appendix), 
implying \eqref{1.diff}. 
In the parabolic-elliptic case $\eps=0$, the functions $\Gamma(t)$ and $G(t)$
are defined without $c_R$, and the final inequality contains the
additional integral $\int_0^t\Gamma(s)^{\alpha+1}G(s)ds$, which is still
covered by Lemma \ref{lem.gronwall}. The condition $\alpha=1$ or $\alpha\ge 2$ comes
from the fact that the derivative of the mapping $s\mapsto s^\alpha$ is 
H\"older continuous
exactly for these values. The numerical results in Section \ref{sec.num} indicate
that the convergence result may still hold for $\alpha\in(1,2)$. 

The optimal convergence rate is expected to be one. Theorem \ref{thm.pp} provides 
an almost optimal rate. The reason for the non-optimality comes from the variant
of the nonlinear Gronwall lemma proved in Lemma \ref{lem.gronwall}. We conjecture that
an optimal rate holds (changing the constants in Lemma \ref{lem.gronwall}), but
since this issue is of less interest, we did not explore it further.

The theoretical results are illustrated by numerical experiments, using the
software tool NGSolve/Netgen. We remark that numerical tests for model
\eqref{1.eq} were already performed in \cite{BeJu13,HiJu11}.
For positive values of $\delta$, the (globally existing)
cell density forms bumps at places
where the solution to the classical Keller--Segel system develops an 
$L^\infty(\Omega)$ blow up.
Compared to \cite{BeJu13,HiJu11}, we investigate the dependence of the shape
of the bumps on $\delta$. In a radially symmetric situation,
it turns out that the radius of the bump (more precisely the diameter of
a level set $\rho_\delta\approx 0$) behaves like $\delta^a$ with $a\approx 0.43$,
and the maximum of the bump behaves like $\delta^{-b}$ with 
$b\approx 1.00$. 

The paper is organized as follows. Theorem \ref{thm.pe} is proved in Section
\ref{sec.pe}, while Theorem \ref{thm.pp} is shown in Section \ref{sec.pp}.
The numerical experiments for model \eqref{1.eq} are performed in
Section \ref{sec.num}. Some technical tools, including the nonlinear
Gronwall inequality, are recalled in the Appendix.


\section{Proof of Theorem \ref{thm.pe}}\label{sec.pe}

We prove Theorem \ref{thm.pe} by an approximation procedure and by deriving the
uniform bounds from discrete versions of the entropy inequalities
\eqref{1.H1} and \eqref{1.Hp}.

{\em Step 1: Solution of a regularized system and entropy estimates.}
We show the existence of solutions to a discretized and regularized
version of \eqref{1.rhov}. For this, let $N\in\N$ and $\tau=T/N$, and set
$\rho(w)=\exp(w/\delta)$. This means that we transform $w=\delta\log\rho$.
Let $w^{k-1}\in H^2(\Omega)$ and $v^{k-1}\in H^1(\Omega)$ be given
and set $\rho^{j}=\rho(w^{j})$ for $j=k,k-1$. Consider for given $\tau>0$
and $\eta>0$ the regularized system
\begin{align}
  &\frac{1}{\tau}\int_\Omega(\rho^k-\rho^{k-1})\phi dx
  + \int_\Omega\big((1+\delta\rho^k)\na\rho^k - \rho^k\na v^k\big)\cdot\na\phi dx 
	\nonumber \\
  &\phantom{xxxx}{}= -\eta\int_\Omega\big(\Delta w^k \Delta\phi
	+ \delta^{-2}|\na w^k|^2\na w^k\cdot\na \phi + w^k\rho^k \phi\big)dx, \label{2.wrho} \\
	&\int_\Omega(\na v^k\cdot\na\theta + v^k\theta)dx 
	= \int_\Omega\big(\delta\rho^k + (\rho^k)^\alpha\big)\theta dx \label{2.wc}
\end{align}
for $\phi\in H^2(\Omega)$ and $\theta\in H^1(\Omega)$. 
The time discretization is needed to handle issues due to low time regularity,
while the elliptic regularization guarantees $H^2(\Omega)$ solutions which, by
Sobolev embedding (recall that $d\le 3$), are bounded. The higher-order gradient
term $|\na w^k|^2\na w^k\cdot\na\phi$ is necessary to derive 
$L^p(\Omega)$ estimates. The existence of a solution
$w^k\in H^2(\Omega)$, $0\le v^k\in H^1(\Omega)$ follows from the techniques
used in the proof of Proposition 3.1 in \cite{HiJu11} 
employing the Leray--Schauder fixed-point theorem. Since these techniques are
rather standard now, we omit the proof and refer to \cite{HiJu11,Jue15,Jue16}
for similar arguments.

Inequality $w^k\rho^k=w^k e^{w^k/\delta}\ge e^{w^k/\delta}-1=\rho^k-1$ allows
us to show as in \cite[page 1004]{HiJu11} that the total mass 
$\|\rho^k\|_{L^1(\Omega)}$ is bounded uniformly in $\delta$.

Entropy estimates are derived from \eqref{2.wrho}--\eqref{2.wc} 
by choosing the test functions $\phi=w^k/\delta=\log\rho^k$ and $\theta=\rho^k$, 
respectively, and adding both equations.
Then the terms involving $\na v^k$ cancel and after some elementary computations,
we end up with
\begin{align}
  \frac{1}{\tau}\int_\Omega & \big(h(\rho^k)-h(\rho^{k-1})\big)dx
	+ 4\int_\Omega|\na(\rho^k)^{1/2}|^2 dx + \delta\int_\Omega|\na\rho^k|^2 dx 
	\nonumber \\
	&\phantom{xx}{}+ \frac{\eta}{\delta}\int_\Omega\big((\Delta w^k)^2
	+ \delta^{-2}|\na w^k|^4 + (w^k)^2\rho^k\big)dx \nonumber \\
	&\le \int_\Omega\big(-v^k\rho^k + \delta(\rho^k)^2 + (\rho^k)^{\alpha+1}\big) dx
	\le \int_\Omega\big(\delta(\rho^k)^2 + (\rho^k)^{\alpha+1}\big) dx, \label{2.ineq}
\end{align}
where $h(s)=s(\log s-1)$ for $s\ge 0$. Using the Gagliardo--Nirenberg inequality
with $\sigma=d/(d+2)$, the Poincar\'e--Wirtinger inequality, and then the
Young inequality with $p=1/\sigma$, $p'=1/(1-\sigma)$, it holds for any
$u\in H^1(\Omega)$ and $\mu>0$ that
\begin{align}
  \|u\|_{L^2(\Omega)}^2
	&\le C\|u\|_{H^1(\Omega)}^{2\sigma}
	\|u\|_{L^1(\Omega)}^{2(1-\sigma)}
	= C\big(\|\na u\|_{L^2(\Omega)}^2 + \|u\|_{L^1(\Omega)}^2\big)^{\sigma}
	\|u\|_{L^1(\Omega)}^{2(1-\sigma)} \nonumber \\
	&\le \mu\big(\|\na u\|_{L^2(\Omega)}^2 + \|u\|_{L^1(\Omega)}^2\big) 
	+ C(\mu)\|u\|_{L^1(\Omega)}^2
	= \mu\|\na u\|_{L^2(\Omega)}^2 + C(\mu)\|u\|_{L^1(\Omega)}^2. \label{2.u}
\end{align}
We deduce from this inequality that the first term on the right-hand side 
of \eqref{2.ineq} can be estimated as
$$
  \delta\int_\Omega(\rho^k)^2 dx
	\le \frac{\delta}{4}\|\na\rho^k\|_{L^2(\Omega)}^2 + C\|\rho^k\|_{L^1(\Omega)}^2,
$$
where here and in the following, $C>0$ denotes a constant, independent of 
$\eta$, $\tau$, and $\delta$, with values varying from line to line.
The first term on the right-hand side can be absorbed by the left-hand side 
of \eqref{2.ineq}, while the second term is bounded.
Using $s^{\alpha+1}\le s^2 +1$ for $s\ge 0$ (since $\alpha<1$) and \eqref{2.u},
the second term on the right-hand side of \eqref{2.ineq} becomes
$$
  \int_\Omega(\rho^k)^{\alpha+1}dx \le \int_\Omega\big((\rho^k)^2+1\big) dx 
	\le \frac{\delta}{4}\|\na\rho^k\|_{L^2(\Omega)}^2 
	+ C\|\rho^k\|_{L^1(\Omega)}^2 + C(\Omega).
$$
Inserting these estimations into \eqref{2.ineq}, we conclude that
\begin{align}
  \frac{1}{\tau}\int_\Omega & \big(h(\rho^k)-h(\rho^{k-1})\big)dx
	+ 4\int_\Omega|\na(\rho^k)^{1/2}|^2 dx 
	+ \frac{\delta}{2}\int_\Omega|\na\rho^k|^2 dx 
	\nonumber \\
	&\phantom{xx}{}+ \frac{\eta}{\delta}\int_\Omega\big((\Delta w^k)^2
	+ \delta^{-2}|\na w^k|^4 + (w^k)^2\rho^k\big)dx
	\le C(\Omega). \label{2.ei}
\end{align}

{\em Step 2: Further uniform estimates.}
The estimates from \eqref{2.ei} are not sufficient for the limit $\delta\to 0$,
therefore, we derive further uniform bounds.
Let $p=2$ or $p=3$. We choose the admissible test functions 
$p(\rho^k)^{p-1}$ and $(p-1)(\rho^k)^p$
in \eqref{2.wrho}--\eqref{2.wc}, respectively, 
and add both equations. The convexity of $s\mapsto s^p$ implies that
$s^p-t^p\le p(s-t)s^{p-1}$. Then, observing that the terms involving $\na v^k$
cancel, we find that
\begin{align}
  \frac{1}{\tau}\int_\Omega&\big((\rho^k)^p-(\rho^{k-1})^p\big) dx
	+ \frac{4}{p}(p-1)\int_\Omega|\na(\rho^k)^{p/2}|^2 dx
	+ \delta\frac{4p(p-1)}{(p+1)^2}\int_\Omega|\na(\rho^k)^{(p+1)/2}|^2dx \nonumber \\
	&\phantom{xx}{}
	+ \eta p\int_\Omega\big(\Delta w^k\Delta (\rho^k)^{p-1}
	+ \delta^{-2}|\na w^k|^2\na w^k\cdot\na(\rho^k)^{p-1} + w^k(\rho^k)^{p}\big)dx 
	\nonumber \\
	&\le (p-1)\int_\Omega\big(\delta(\rho^k)^{p+1} + (\rho^k)^{p+\alpha}\big)dx.
	\label{2.aux}
\end{align}
By \eqref{2.u}, we find that
$$
  \delta\int_\Omega(\rho^k)^{p+1}dx = \delta\|(\rho^k)^{(p+1)/2}\|_{L^2(\Omega)}^2
	\le \frac{\delta}{2}\|\na(\rho^k)^{(p+1)/2}\|_{L^2(\Omega)}^2
	+ C\|(\rho^k)^{(p+1)/2}\|_{L^1(\Omega)}^2.
$$
The estimate for $(\rho^k)^{\alpha+p}$ requires that $\alpha<2/d$. Indeed,
we deduce from the Gagliardo--Nirenberg inequality with 
$\sigma=d(2\alpha+p)/((d+2)(\alpha+p))$ and similar arguments as in \eqref{2.u} that
\begin{align*}
  \int_\Omega(\rho^k)^{\alpha+p}dx 
	&= \|(\rho^k)^{p/2}\|_{L^{2(\alpha+p)/p}(\Omega)}^{2(\alpha+p)/p}
	\le C\|(\rho^k)^{p/2}\|_{H^1(\Omega)}^{2(\alpha+p)\sigma/p}
  \|(\rho^k)^{p/2}\|_{L^1(\Omega)}^{2(\alpha+p)(1-\sigma)/p} \\
	&= C\big(\|\na(\rho^k)^{p/2}\|_{L^2(\Omega)}^2 + \|(\rho^k)^{p/2}\|_{L^1(\Omega)}^2
	\big)^{(\alpha+p)\sigma/p}\|(\rho^k)^{p/2}\|_{L^1(\Omega)}^{2(\alpha+p)(1-\sigma)/p} \\
	&\le \frac12\big(\|\na(\rho^k)^{p/2}\|_{L^2(\Omega)}^2 
	+ \|(\rho^k)^{p/2}\|_{L^1(\Omega)}^2
	\big) + C\|(\rho^k)^{p/2}\|_{L^1(\Omega)}^{\beta(p)},
\end{align*}
where $\beta(p):=2(\alpha+p)(1-\sigma)/((\alpha+p)(1-\sigma)-\alpha)\ge 2$
(the exact value of $\beta(p)$ is not important in the following).
For the last step, we used
the crucial inequality $(\alpha+p)\sigma/p<1$ (which is equivalent to
$\alpha<p/d$).

Because of $\rho^k=\exp(w^k/\delta)$, a computation shows that the integral 
with factor $\eta$ can be written as
\begin{align*}
  \eta &p\int_\Omega\big(\Delta w^k\Delta (\rho^k)^{p-1}
	+ \delta^{-2}|\na w^k|^2\na w^k\cdot\na(\rho^k)^{p-1} + w^k(\rho^k)^{p}\big)dx \\
	&= \frac{\eta}{\delta} p(p-1)\int_\Omega(\rho^k)^{p-1}\bigg[
	\bigg(\Delta w^k + \frac{p-1}{2\delta}|\na w^k|^2\bigg)^2
	+ \frac{1}{4\delta^2}\big(4-(p-1)^2\big)|\na w^k|^4\bigg]dx \\
	&\phantom{xx}{}+ \eta p\int_\Omega w^k e^{pw^k/\delta}dx.
\end{align*}
The last integral is bounded from below, independently of $\delta$. 
Since $p=2$ or $p=3$, 
the first integral on the right-hand side is nonnegative. (At this point, we need
the term $|\na w^k|^2\na w^k\cdot\na\phi$.) Summarizing these estimates, we infer that
\begin{align}
  \frac{1}{\tau}\int_\Omega&\big((\rho^k)^p-(\rho^{k-1})^p\big) dx
	+ \int_\Omega|\na(\rho^k)^{p/2}|^2 dx
	+ \frac{\delta}{4}\int_\Omega|\na(\rho^k)^{(p+1)/2}|^2dx \nonumber \\
  &\le C\|(\rho^k)^{(p+1)/2}\|_{L^1(\Omega)}^2 
	+ C\|(\rho^k)^{p/2}\|_{L^1(\Omega)}^{\beta(p)}. \label{2.est2}
\end{align}

{\em Step 3: Limit $(\eta,\tau)\to 0$.} 
Let $w_\tau(x,t)=w^k(x)$, $\rho_\tau(x,t)=\rho(w^k(x))$, $v_\tau(x,t)=v^k(x)$ 
for $x\in\Omega$ and $t\in((k-1)\tau,k\tau]$, $k=1,\ldots,N$, 
be piecewise constant functions in time. 
At time $t=0$, we set $w_\tau(x,0)=\log\rho^0(x)$ and $\rho_\tau(x,0)=\rho^0(x)$
for $x\in\Omega$. (Here, we need $\rho^0\ge C>0$ in $\Omega$ and another approximation
procedure which we omit; see, e.g., \cite[proof of Theorem 4.1]{Jue16}.)
Furthermore, we introduce the shift operator
$\pi_\tau\rho_\tau(x,t)=\rho_\tau(x,t-\tau)$ for $x\in\Omega$, $t\ge\tau$.
Then the weak formulation \eqref{2.wrho}--\eqref{2.wc} can be written as
\begin{align}
  &\frac{1}{\tau}\int_0^T\int_\Omega(\rho_\tau-\pi_\tau\rho_\tau)\phi dxdt
	+ \int_0^T\int_\Omega\big((1+\delta\rho_\tau)\na\rho_\tau-\rho_\tau\na v_\tau\big)
	\cdot\na\phi dxdt \nonumber \\
	&\phantom{xxx}= -\eta\int_0^T\int_\Omega\big(\Delta w_\tau\Delta\phi
	+ \delta^{-2}|\na w_\tau|^2\na w_\tau\cdot\na\phi + w_\tau\rho_\tau\phi\big)dxdt, 
	\label{2.rhotau} \\
	&\int_0^T\int_\Omega(\na v_\tau\cdot\na\theta + v_\tau\theta)dxdt
	= \int_0^T\int_\Omega(\delta\rho_\tau + \rho_\tau^\alpha)\theta dxdt, 
	\label{2.vtau}
\end{align}
where $\phi:(0,T)\to H^2(\Omega)$ and $\theta:(0,T)\to H^1(\Omega)$
are piecewise constant functions.

Multiplying \eqref{2.ei} by $\tau$, summing over $k=1,\ldots,N$, and applying the
discrete Gronwall inequality \cite[Lemma A.2]{Jue16} provides the following uniform
estimates:
\begin{align}
  \|\rho_\tau\|_{L^\infty(0,T;L^1(\Omega))} 
	+ \|\rho_\tau^{1/2}\|_{L^2(0,T;H^1(\Omega))} 
	+ \delta^{1/2}\|\rho_\tau\|_{L^2(0,T;H^1(\Omega))} &\le C, \label{3.rhotau} \\
  \eta^{1/2}\|\Delta w_\tau\|_{L^2(Q_T)}
	+ \eta^{1/4}\delta^{-1/2}\|\na w_\tau\|_{L^4(Q_T)}
	+ \eta^{1/2}\|w_\tau\rho_\tau^{1/2}\|_{L^2(Q_T)} &\le C. \label{3.wtau}
\end{align}

The (simultaneous) limit $(\eta,\tau)\to 0$ does not require estimates 
uniform in $\delta$.
Therefore, we can exploit the bound for $\rho_\tau$ in $L^2(0,T;H^1(\Omega))$.
Together with the uniform $L^\infty(0,T;L^1(\Omega))$ bound, we obtain from
the Gagliardo--Nirenberg inequality as in \cite[page 95]{Jue16} that
$(\rho_\tau)$ is bounded in $L^{2+2/d}(Q_T)$, recalling that $Q_T=\Omega\times(0,T)$.
Since $-\Delta v_\tau + v_\tau = \delta\rho_\tau + \rho_\tau^\alpha$ is
bounded in $L^{2+2/d}(Q_T)$, we deduce from elliptic regularity a uniform bound for 
$v_\tau$ in $L^{2+2/d}(0,T;W^{2,2+2/d}(\Omega))$. Therefore,
$\rho_\tau\na v_\tau$ is uniformly bounded in $L^{1+1/d}(Q_T)$ and
$\rho_\tau\na\rho_\tau$ is uniformly bounded in $L^{(2d+2)/(2d+1)}(Q_T)$.
Consequently, $(\rho_\tau-\pi_\tau\rho_\tau)/\tau=\diver((1+\delta\rho_\tau)\na\rho_\tau
- \rho_\tau\na v_\tau)$ is uniformly bounded in $L^{(2d+2)/(2d+1)}
(0,T;W^{-1,(2d+2)/(2d+1)}(\Omega))$.

The Aubin--Lions lemma in the version of \cite{DrJu12} shows that there exists
a subsequence, which is not relabeled, such that, as $(\eta,\tau)\to 0$,
$$
  \rho_\tau\to\rho\quad\mbox{strongly in }L^2(0,T;L^p(\Omega))
$$
for any $p<6$ and in $L^q(Q_T)$ for any $q<2+2/d$. 
Moreover, because of the bounds \eqref{3.rhotau}, 
again for a subsequence, as $(\eta,\tau)\to 0$,
\begin{align*}
  \na\rho_\tau\rightharpoonup \na\rho &\quad\mbox{weakly in }L^2(Q_T), \\
	\tau^{-1}(\rho_\tau-\pi_\tau\rho_\tau) \rightharpoonup \pa_t\rho
	&\quad\mbox{weakly in }L^{(2d+2)/(2d+1)}(0,T;W^{-1,(2d+2)/(2d+1)}(\Omega)), \\
  v_\tau\rightharpoonup v &\quad\mbox{weakly in }L^{2+2/d}(0,T;W^{2,2+2/d}(\Omega)).
\end{align*}
We deduce that $\rho_\tau\na\rho_\tau\rightharpoonup\rho\na\rho$ and
$\rho_\tau\na v_\tau\rightharpoonup\rho\na v$ weakly in $L^1(Q_T)$ as well as
$\rho_\tau^\alpha\to\rho^\alpha$ strongly in $L^2(Q_T)$. 

The limit in the term involving $\eta$ is performed as in \cite{HiJu11}:
Estimates \eqref{3.wtau} imply that, for any $\phi\in L^4(0,T;H^2(\Omega))$,
\begin{align*}
  \bigg|\eta&\int_0^T\int_\Omega\big(\Delta w_\tau\Delta\phi
	+ \delta^{-2}|\na w_\tau|^2\na w_\tau\cdot\na\phi + w_\tau\rho_\tau\phi\big)
	dxdt\bigg| \\
	&\le \eta\|\Delta w_\tau\|_{L^2(Q_T)}\|\Delta\phi\|_{L^2(Q_T)}
	+ \eta\delta^{-2}\|\na w_\tau\|_{L^4(Q_T)}^3\|\na\phi\|_{L^4(Q_T)} \\
	&\phantom{xx}{}+ \eta\|w_\tau\rho_\tau^{1/2}\|_{L^2(Q_T)}\|\rho_\tau^{1/2}\|_{L^4(Q_T)}
	\|\phi\|_{L^4(Q_T)} \\
	&\le C(\delta)(\eta^{1/2}+\eta^{1/4})\|\phi\|_{L^4(0,T;H^2(\Omega))}
	\to 0\quad\mbox{as }\eta\to 0.
\end{align*}

Thus, performing the limit $(\eta,\tau)\to 0$ in \eqref{2.rhotau}--\eqref{2.vtau}, 
it follows that $(\rho,c)$ solve
\begin{align}
  & \int_0^T\langle\pa_t\rho,\phi\rangle dt
	+ \int_0^T\int_\Omega\big((1+\delta\rho)\na\rho - \rho\na v\big)\cdot\na\phi dxdt = 0, 
	\label{2.rhodelta} \\
	& \int_0^T\int_\Omega(\na v\cdot\na\theta + v\theta)dxdt
	= \int_0^T\int_\Omega(\delta\rho + \rho^\alpha)\theta dxdt, \label{2.vdelta}
\end{align}
where, by density, we can choose test functions $\phi\in 
L^\infty(0,T;H^1(\Omega))$ and $\theta\in L^2(0,T;$ $H^1(\Omega))$.
The initial datum $\rho(0)=\rho^0$ is satisfied in the sense of $H^1(\Omega)'$; 
see, e.g., \cite[pp.~1980f.]{Jue15} for a proof.

{\em Step 4: Limit $\delta\to 0$.} For this limit, we need further uniform estimates.
Let $(\rho_\tau,v_\tau)$ be a solution to \eqref{2.rhotau}--\eqref{2.vtau}.
We formulate \eqref{2.est2} as
\begin{align}
  \int_\Omega&(\rho_\tau^p-(\pi_\tau\rho_\tau)^p)dx
	+ \int_0^T\int_\Omega|\na\rho_\tau^{p/2}|^2 dxdt
	+ \frac{\delta}{4}\int_0^T\int_\Omega|\na \rho_\tau^{(p+1)/2}|^2 dxdt \nonumber \\
	&\le C\int_0^T\|\rho_\tau^{(p+1)/2}\|_{L^1(\Omega)}^2 dt
	+ C\int_0^T\|\rho_\tau^{p/2}\|_{L^1(\Omega)}^{\beta(p)}dt, \label{2.aux2}
\end{align}
where we recall that $C>0$ is independent of $(\eta,\tau,\delta)$. 
The $L^\infty(0,T;L^2(\Omega))$ and $L^2(0,T;$ $H^1(\Omega))$ bounds 
for $(\rho_\tau^{1/2})$ show that
\begin{align*}
  \int_0^T\|\na\rho_\tau\|_{L^1(\Omega)}^2 dt
	&= 4\int_0^T\|\rho_\tau^{1/2}\na\rho_\tau^{1/2}\|_{L^1(\Omega)}^2 dt
	\le 4\int_0^T\|\rho_\tau^{1/2}\|_{L^2(\Omega)}^2
	\|\na\rho_\tau^{1/2}\|_{L^2(\Omega)}^2 dt \\
	&\le 4\|\rho_\tau\|_{L^\infty(0,T;L^1(\Omega))}
	\int_0^T\|\na\rho_\tau^{1/2}\|_{L^2(\Omega)}^2 dt \le C.
\end{align*}
Thus, $(\rho_\tau)$ is bounded in $L^2(0,T;W^{1,1}(\Omega))\hookrightarrow
L^2(0,T;L^{3/2}(\Omega))$, as $d\le 3$.

Let $p=2$ in \eqref{2.aux2}. As the right-hand side of \eqref{2.aux2} 
is uniformly bounded, we infer the bounds
$$
  \|\rho_\tau\|_{L^\infty(0,T;L^2(\Omega))} 
  + \|\rho_\tau\|_{L^2(0,T;H^1(\Omega))} 
	+ \delta^{1/2}\|\rho_\tau^{3/2}\|_{L^2(0,T;H^1(\Omega))} \le C.
$$
Choosing $p=3$ in \eqref{2.aux2}, the right-hand side is again bounded,
yielding the estimates
$$
  \|\rho_\tau\|_{L^\infty(0,T;L^3(\Omega))} 
	+ \delta^{1/2}\|\rho_\tau^{2}\|_{L^2(0,T;H^1(\Omega))} \le C.
$$
By elliptic regularity, since $-\Delta v_\tau+v_\tau=\delta \rho_\tau+\rho_\tau^\alpha
\in L^\infty(0,T;L^3(\Omega))$, the family $(v_\tau)$ is bounded in 
$L^\infty(0,T;W^{2,3}(\Omega))\hookrightarrow L^\infty(0,T;W^{1,p}(\Omega))$ 
for all $p<\infty$.

We know from Step 3 that $(\rho_\tau,v_\tau)$ converges in some norms to
$(\rho_\delta,v_\delta):=(\rho,v)$ solving \eqref{2.rhodelta}--\eqref{2.vdelta}.
By the weakly lower semicontinuity of the norm and the a.e.\ convergence
$\rho_\tau^2\to\rho^2$ in $Q_T$, it follows that, after performing
the limit $(\eta,\tau)\to 0$,
\begin{equation}\label{2.est3}
  \|\rho_\delta\|_{L^\infty(0,T;L^3(\Omega))}
	+ \|\rho_\delta\|_{L^2(0,T;H^1(\Omega))}
	+ \delta^{1/2}\|\rho_\delta^2\|_{L^2(0,T;H^1(\Omega))} 
	+ \|v_\delta\|_{L^\infty(0,T;W^{1,p}(\Omega))} \le C.
\end{equation}

We wish to derive a uniform estimate for the time derivative $\pa_t\rho_\delta$.
Let $\phi\in L^2(0,T;H^1(\Omega))$. Then
\begin{align*}
  \bigg|\int_0^T\langle\pa_t\rho_\delta,\phi\rangle dt\bigg|
	&\le \bigg(\|\na\rho_\delta\|_{L^2(Q_T)} 
	+ \frac{\delta}{2}\|\na(\rho_\delta^2)\|_{L^2(Q_T)}\bigg)\|\na\phi\|_{L^2(Q_T)} \\
	&\phantom{xx}{}+ \|\rho_\delta\|_{L^2(0,T;L^4(\Omega))}
	\|\na v_\delta\|_{L^\infty(0,T;L^4(\Omega))}\|\na\phi\|_{L^2(Q_T)} \le C.
\end{align*}
This shows that $(\pa_t\rho_\delta)$ is bounded in $L^2(0,T;H^1(\Omega)')$.
By the Aubin--Lions lemma in the version of \cite{Sim87}, there exists a subsequence,
which is not relabeled, such that, as $\delta\to 0$,
$$
  \rho_\delta\to \rho\quad\mbox{strongly in }L^2(0,T;L^p(\Omega)), \quad
	p<6.
$$
Furthermore, we deduce from the bounds \eqref{2.est3}, again for a subsequence, that
\begin{align*}
  \na\rho_\delta\rightharpoonup\na\rho &\quad\mbox{weakly in }L^2(Q_T), \\
	\delta\na(\rho_\delta^2)\to 0 &\quad\mbox{strongly in }L^2(Q_T), \\
	\pa_t\rho_\delta\rightharpoonup\pa_t\rho &\quad\mbox{weakly in }L^2(0,T;H^1(\Omega)'),
	\\
	v_\delta\rightharpoonup^* v &\quad\mbox{weakly* in }L^\infty(0,T;W^{1,p}(\Omega)),
	\quad p<\infty.
\end{align*}
In particular, $\rho_\delta\na v_\delta\rightharpoonup\rho\na v$ weakly in
$L^2(Q_T)$. Thus, we can perform the limit $\delta\to 0$ in 
\eqref{2.rhodelta}--\eqref{2.vdelta}, which gives
\begin{align*}
  \int_0^T\langle\pa_t\rho,\phi\rangle dt
	+ \int_0^T\int_\Omega(\na\rho-\rho\na v)\cdot\na\phi dxdt &= 0, \\
  \int_0^T\int_\Omega(\na v\cdot\na\theta + v\theta)dxdt
	&= \int_0^T\int_\Omega\rho^\alpha\theta dx
\end{align*}
for all $\phi$, $\theta\in L^2(0,T;H^1(\Omega))$. Furthermore, we show as in
\cite[pp.~1980f.]{Jue15} that $\rho(0)=\rho^0$ in the sense of $H^1(\Omega)'$.

{\em Step 5: Convergence of the whole sequence.} The whole sequence
$(\rho_\delta,v_\delta)$ converges if the limit problem has a unique solution.
Uniqueness follows by standard estimates if $\rho$, $\na c\in 
L^\infty(0,T;L^\infty(\Omega))$. Since $-\Delta c+c=\rho^\alpha\in 
L^\infty(0,T;L^{3/\alpha}(\Omega))$ and $3/\alpha>3\ge d$, elliptic regularity shows that
$c\in L^\infty(0,T;W^{2,3/\alpha}(\Omega))\hookrightarrow 
L^\infty(0,T;W^{1,\infty}(\Omega))$.
Then \cite[Lemma~1]{HPS07} shows that $\rho\in L^\infty(0,T;L^\infty(\Omega))$,
finishing the proof.


\section{Proof of Theorem \ref{thm.pp}}\label{sec.pp}

It is shown in \cite{HiJu11} that \eqref{1.eq}--\eqref{1.bic} with $\alpha=1$
has a global weak
solution in two space dimensions. Since the solutions to the limiting 
Keller--Segel system may blow up after finite time, we cannot generally expect 
estimates that are uniform in $\delta$ globally in time. 
Moreover, we need higher-order estimates not provided by
the results of \cite{HiJu11}. Therefore, we show first the local existence of
smooth solutions and then uniform $H^s(\Omega)$ bounds.

{\em Step 1: Local existence of smooth solutions.}
Let $\eps=1$. The eigenvalues of the diffusion matrix associated to
\eqref{1.eq}, 
$$
  A(\rho,c) = \begin{pmatrix} 1 & -\rho \\ \delta & 1 \end{pmatrix},
$$
equal $\lambda=1\pm\mathrm{i}\sqrt{\delta\rho}$, 
and they have a positive real part for all
$\rho>0$, i.e., $A(\rho,c)$ is normally elliptic.
Therefore, according to \cite[Theorem 14.1]{Ama93} (also see \cite[Theorem 3.1]{Jue17}), 
there exists a unique maximal solution to \eqref{1.eq}--\eqref{1.bic} satisfying
$(\rho,c)\in C^\infty(\overline\Omega\times(0,T^*);\R^2)$,
where $0<T^*\le \infty$.

Next, let $\eps=0$. We use the Schauder fixed-point theorem to prove the
regularity of the solutions to \eqref{1.rhov}--\eqref{1.bic2}. 
We only sketch the proof, since the arguments are rather standard.
We introduce the set
$$
  S = \big\{\widetilde\rho\in C^0(\overline\Omega\times[0,T]):
	0\le\widetilde\rho\le R,\ 
	\|\widetilde\rho\|_{C^{\gamma,\gamma/2}(\overline\Omega\times[0,T])}\le K\big\}
$$
for some $R>0$ and $M>0$. Let $\widetilde\rho\in S$. By elliptic regularity
(combining Theorems 2.4.2.7 and 2.5.1.1 in \cite{Gri85}), the unique solution to
$$
  -\Delta v+v = \delta\widetilde\rho+\widetilde\rho^\alpha\quad\mbox{in }\Omega,
	\quad \na\widetilde\rho\cdot\nu=0\quad\mbox{on }\pa\Omega,
$$
satisfies $v\in C^0([0,T];W^{2,p}(\Omega))$ for all $p<\infty$. Hence, by
Sobolev embedding, $h:=\widetilde\rho\na v\in C^0(\overline\Omega\times[0,T])$.
Thus, using \cite[Lemma 2.1iv]{Lan17}, the unique solution to
$$
  \pa_t\rho = \diver\big((1+\delta\widetilde\rho)\na\rho - h)\quad\mbox{in }\Omega,
	\ t>0, \quad\na\rho\cdot\nu=0\quad\mbox{on }\pa\Omega,
$$
satisfies $\rho\in C^{\gamma,\gamma/2}(\overline\Omega\times[0,T])$.
By elliptic regularity again, $v\in C^{2,\gamma/2}(\overline\Omega\times[0,T])$.
Consequently, $h\in C^{\gamma,\gamma/2}(\overline\Omega\times[0,T])$
and applying \cite[Lemma 2.1iv]{Lan17} again, we infer that
$\rho\in C^{2,1}(\overline\Omega\times[0,T])$. It is possible to show that
$\rho\in S$ for suitable $R>0$ and $M>0$. Hence,
the existence of a solution to \eqref{1.rhov}--\eqref{1.bic2}
follows from the Schauder fixed-point theorem. 

Elliptic regularity implies that $v\in C^{4,1}(\overline\Omega\times[0,T])$. 
Then $f:=\diver(\rho\na v)\in C^{1,1}(\overline\Omega\times[0,T])$ and
the solution $u=\rho$ to the linear parabolic equation 
$\pa_t u-\Delta u - \diver(\rho\na u)=f$ in $\Omega$, $t>0$, with no-flux
boundary conditions satisfies $u\in 
C^{2+\gamma,1+\gamma/2}(\overline\Omega\times[0,T])$ \cite[Corollary 5.1.22]{Lun95}
(here we need $\rho^0\in C^{2+\gamma}(\overline\Omega)$).
Thus, the regularity of $f$ improves to $f\in C^{1+\gamma,1+\gamma/2}
(\overline\Omega\times[0,T])$. By parabolic regularity 
\cite[Theorem 9.2, p.~137]{Fri08}, we infer that 
$u\in C^{2+\gamma,2}(\overline\Omega\times[0,T])$. Bootstrapping this
argument and using \cite[Theorems 10.1--10.2, pp.~139f.]{Fri08}, we find that
$\rho=u\in C^{\infty}(\overline\Omega\times(0,T])$ and consequently
$v\in C^{\infty}(\overline\Omega\times(0,T])$. 

{\em Step 2: Preparations.} Let $\eps=1$, let $(\rho_\delta,c_\delta)$ be a local
smooth solution to \eqref{1.eq}-\eqref{1.bic} with $0<\delta<1$,
and let $(\rho,c)$ be a local smooth solution to \eqref{1.bic}--\eqref{1.ks}.
Then $\rho_R:=\rho_\delta-\rho$ and $c_R:=c_\delta-c$ solve
\begin{align}
  \pa_t\rho_R &= \diver\big(\na\rho_R - \rho_R\na(c+c_R) - \rho\na c_R\big), 
	\label{3.rhoR} \\
	\pa_t c_R &= \Delta c_R - c_R + \delta\Delta(\rho+\rho_R)
	+ (\rho+\rho_R)^\alpha - \rho^\alpha\quad\mbox{in }\Omega,\ t>0, \label{3.cR}
\end{align}
$(\rho_R,c_R)$ satisfies homogeneous Neumann boundary conditions and
vanishing initial conditions:
$$
  \na\rho_R\cdot\nu = \na c_R\cdot\nu = 0\quad\mbox{on }\pa\Omega,\ t>0, \quad
	\rho_R(0) = c_R(0)=0\quad\mbox{in }\Omega.
$$
The aim is to prove a differential inequality for
$$
  \Gamma(t) = \|(\rho_R,c_R)(t)\|_{H^2(\Omega)}^2, \quad
	G(t) = \|(\rho_R,c_R)(t)\|_{H^2(\Omega)}^2 
	+ \|\na\Delta(\rho_R,c_R)(t)\|_{L^2(\Omega)}^2,
$$
where $\|(\rho_R,c_R)\|^2_X = \|\rho_R\|_X^2 + \|c_R\|_X^2$ 
for suitable norms $\|\cdot\|_X$.

{\em Step 3: $H^1(\Omega)$ estimates.} We use $\rho_R$ as a test function
in \eqref{3.rhoR}:
\begin{align*}
  \frac12\frac{d}{dt}\|\rho_R\|_{L^2(\Omega)}^2
	+ \|\na\rho_R\|_{L^2(\Omega)}^2
	&= \int_\Omega\rho_R\na(c+c_R)\cdot\na\rho_R dx 
	+ \int_\Omega\rho\na c_R\cdot\na\rho_R dx \\
	&=: I_1 + I_2.
\end{align*}
By Young's inequality, for any $\eta>0$, we have
$$
  I_2 \le \frac{\eta}{2}\|\na\rho_R\|_{L^2(\Omega)}^2
	+ \frac{1}{2\eta}\|\rho\|_{L^\infty(\Omega)}^2\|\na c_R\|_{L^2(\Omega)}^2
	\le \frac{\eta}{2}\|\na\rho_R\|_{L^2(\Omega)}^2
	+ C(\eta)\|\na c_R\|_{L^2(\Omega)}^2,
$$
where here and in the following, $C>0$ and $C(\eta)>0$ denote generic constants
independent of $\delta$ but depending on suitable norms of $(\rho,c)$.
The embedding $H^2(\Omega)\hookrightarrow L^\infty(\Omega)$ (for $d\le 3$) gives
\begin{align*}
  I_1 &\le \eta\|\na\rho_R\|_{L^2(\Omega)}^2
	+ \frac{1}{2\eta}\|\na c\|_{L^\infty(\Omega)}^2\|\rho_R\|_{L^2(\Omega)}^2
	+ \frac{1}{2\eta}\|\rho_R\|_{L^\infty(\Omega)}^2\|\na c_R\|_{L^2(\Omega)}^2 \\
	&\le \eta\|\na\rho_R\|_{L^2(\Omega)}^2
	+ C(\eta)\|\rho_R\|_{L^2(\Omega)}^2 
	+ C(\eta)\|\rho_R\|_{H^2(\Omega)}^2\|\na c_R\|_{L^2(\Omega)}^2.
\end{align*}
Combining the estimates for $I_1$ and $I_2$ and choosing $\eta>0$ sufficiently
small, we find that
\begin{equation}\label{3.s1}
  \frac{d}{dt}\|\rho_R\|_{L^2(\Omega)}^2
	+ C\|\na\rho_R\|_{L^2(\Omega)}^2
	\le C\big(1+\|\rho_R\|_{H^2(\Omega)}^2\big)\|\na c_R\|_{L^2(\Omega)}^2
	+ C\|\rho_R\|_{L^2(\Omega)}^2.
\end{equation}

Next, we use the test function $c_R$ in \eqref{3.cR}:
\begin{align*}
  \frac{1}{2}\frac{d}{dt}\|c_R\|_{L^2(\Omega)}^2
	+ \|c_R\|_{H^1(\Omega)}^2 
	&= -\delta\int_\Omega\na(\rho+\rho_R)\cdot\na c_R dx 
	+ \int_\Omega\big((\rho+\rho_R)^\alpha-\rho^\alpha\big) c_Rdx \\
	&= I_3 + I_4.
\end{align*}
Thus, for any $\eta>0$,
\begin{align*}
  I_3 &\le \frac{\eta}{2}\|\na c_R\|_{L^2(\Omega)}^2
	+ \frac{\delta^2}{2\eta}\big(\|\na\rho\|_{L^2(\Omega)}^2
	+\|\na\rho_R\|_{L^2(\Omega)}^2\big) \\
	&\le \frac{\eta}{2}\|\na c_R\|_{L^2(\Omega)}^2 
	+ C(\eta)\|\na\rho_R\|_{L^2(\Omega)}^2 + C(\eta)\delta^2.
\end{align*}

For the estimate of $I_4$, we apply the mean-value theorem to the function
$s\mapsto s^\alpha$ (recalling that $\alpha\ge 1$):
$$
  |(\rho+\rho_R)^\alpha-\rho^\alpha| 
	\le C(1+\|\rho_R\|_{L^\infty(\Omega)})^{\alpha-1}|\rho_R| 
	\le C(1+\|\rho_R\|_{L^\infty(\Omega)}^{\alpha-1})|\rho_R|.
$$
Hence, together with the embedding $H^2(\Omega)\hookrightarrow L^\infty(\Omega)$,
$$
  I_4 \le \eta\|c_R\|_{L^2(\Omega)}^2 
	+ C(\eta)\big(1+\|\rho_R\|_{H^2(\Omega)}^{2(\alpha-1)}\big)
	\|\rho_R\|_{L^2(\Omega)}^2.
$$
Collecting these estimates and choosing $\eta>0$ sufficiently small, it follows that
\begin{equation}\label{3.s2}
  \frac{1}{2}\frac{d}{dt}\|c_R\|_{L^2(\Omega)}^2 + C\|c_R\|_{H^1(\Omega)}^2
	\le C\|\na\rho_R\|_{L^2(\Omega)}^2
	+ C\big(1+\|\rho_R\|_{H^2(\Omega)}^{2(\alpha-1)}\big)\|\rho_R\|_{L^2(\Omega)}^2
	+ C\delta^2.
\end{equation}

Thus, summing \eqref{3.s1} and \eqref{3.s2}, 
\begin{equation}\label{3.ineq1}
  \frac{d}{dt}\|(\rho_R,c_R)\|_{L^2(\Omega)}^2
	+ C\|(\rho_R,c_R)\|_{H^1(\Omega)}^2
	\le C\big(\Gamma(t) + \Gamma(t)^2 + \Gamma(t)^\alpha\big) + C\delta^2.
\end{equation}

{\em Step 4: $H^2(\Omega)$ estimates.} We multiply \eqref{3.rhoR} by $-\Delta\rho_R$
and integrate by parts in the expression with the time derivative:
\begin{align*}
  \frac12\frac{d}{dt}\|\na\rho_R\|_{L^2(\Omega)}^2
	+ \|\Delta\rho_R\|_{L^2(\Omega)}^2
	&= \int_\Omega\diver\big(\rho_R\na(c+c_R)\big)\Delta\rho_R dx
	+ \int_\Omega\diver(\rho\na c_R)\Delta\rho_R dx \\
	&= \int_\Omega\big(\na\rho_R\cdot\na(c+c_R)+\rho_R\Delta(c+c_R)\big)\Delta\rho_R dx \\
  &\phantom{xx}{}+ \int_\Omega\big(\na\rho\cdot\na c_R + \rho\Delta c_R)\Delta\rho_R dx
	=: I_5 + I_6.
\end{align*}
Then, taking into account inequality \eqref{a.L2} in the Appendix,
\begin{align*}
  I_5 &\le \eta\|\Delta\rho_R\|_{L^2(\Omega)}^2
	+ C(\eta)\|\na\rho_R\|_{L^2(\Omega)}^2 
	+ C(\eta)\|\na\rho_R\|_{H^1(\Omega)}^2\|\na c_R\|_{H^1(\Omega)}^2 \\
	&\phantom{xx}{}+ C(\eta)\|\rho_R\|_{L^2(\Omega)}^2
	+ C(\eta)\|\rho_R\|_{H^1(\Omega)}^2\|\Delta c_R\|_{H^1(\Omega)}^2, \\
	I_6 &\le \eta\|\Delta\rho_R\|_{L^2(\Omega)}^2
	+ C(\eta)\|\na c_R\|_{L^2(\Omega)}^2 + C(\eta)\|\Delta c_R\|_{L^2(\Omega)}^2,
\end{align*}
and choosing $\eta>0$ sufficiently small, we end up with
\begin{align}
  \frac12\frac{d}{dt}\|\na\rho_R\|_{L^2(\Omega)}^2 
	+ C\|\Delta\rho_R\|_{L^2(\Omega)}^2
	&\le C\|\rho_R\|_{H^1(\Omega)}^2 
	+ C\big(1+\|\na\rho_R\|_{H^1(\Omega)}^2\big)\|c_R\|_{H^2(\Omega)}^2 \nonumber \\
	&\phantom{xx}{}+ C\|\rho_R\|_{H^1(\Omega)}^2\|\Delta c_R\|_{H^1(\Omega)}^2.
	\label{3.s3}
\end{align}

We multiply \eqref{3.cR} by $-\Delta c_R$ and estimate similarly as in Step 3:
\begin{align*}
  \frac{1}{2}\frac{d}{dt}&\|\na c_R\|_{L^2(\Omega)}^2
	+ \|\Delta c_R\|_{L^2(\Omega)}^2 + \|\na c_R\|_{L^2(\Omega)}^2 \\
	&= \delta\int_\Omega\Delta(\rho+\rho_R)\Delta c_R dx
  + \int_\Omega\big((\rho+\rho_R)^\alpha-\rho^\alpha\big)\Delta c_R dx \nonumber \\
	&\le \eta\|\Delta c_R\|_{L^2(\Omega)}^2
	+ C\big(1+\|\rho_R\|_{H^2(\Omega)}^{2(\alpha-1)}\big)\|\rho_R\|_{L^2(\Omega)}^2
	+ C\delta^2\|\Delta\rho_R\|_{L^2(\Omega)}^2 + C\delta^2.
\end{align*}
Hence, for sufficiently small $\eta>0$,
\begin{align}
  \frac{1}{2}\frac{d}{dt}&\|\na c_R\|_{L^2(\Omega)}^2
	+ \|\Delta c_R\|_{L^2(\Omega)}^2 + \|\na c_R\|_{L^2(\Omega)}^2 \nonumber \\
	&\le C\big(1+\|\rho_R\|_{H^2(\Omega)}^{2(\alpha-1)}\big)\|\rho_R\|_{L^2(\Omega)}^2
	+ C\delta^2\|\Delta\rho_R\|_{L^2(\Omega)}^2 + C\delta^2. \label{3.cH2}
\end{align}
Adding this inequality and \eqref{3.s3}, adding $\|c_R\|_{L^2(\Omega)}^2$ on
both sides, using \eqref{a.H2} in the Appendix, and choosing $\delta>0$
sufficiently small to absorb the term $C\delta^2\|\Delta\rho_R\|_{L^2(\Omega)}^2$, 
we infer that
\begin{align}
  \frac{d}{dt}\|\na(\rho_R,c_R)\|_{L^2(\Omega)}^2
	&+ C\|(\rho_R,c_R)\|_{H^2(\Omega)}^2 
	\le C\|(\rho_R,c_R)\|_{H^2(\Omega)}^2
	+ C\|\na\rho_R\|_{H^1(\Omega)}^2\|\na c_R\|_{H^1(\Omega)}^2 \nonumber \\
	&{}+ C\|\rho_R\|_{H^1(\Omega)}^2\|\Delta c_R\|_{H^1(\Omega)}^2
	+ C\|\rho_R\|_{H^2(\Omega)}^{2(\alpha-1)}\|\rho_R\|_{L^2(\Omega)}^2 
	+ C\delta^2 \nonumber \\
	&\le C\big(\Gamma(t) + \Gamma(t)^2 + \Gamma(t)^\alpha + \Gamma(t)G(t)\big)
	+ C\delta^2. \label{3.ineq2}
\end{align}

{\em Step 5: $H^3(\Omega)$ estimates.}
We apply the Laplacian to \eqref{3.rhoR} and \eqref{3.cR} and multiply both equations
by $\Delta\rho_R$, $\Delta c_R$, respectively:
\begin{align*}
  \frac12\frac{d}{dt}&\|\Delta\rho_R\|_{L^2(\Omega)}^2
	+ \|\na\Delta\rho_R\|_{L^2(\Omega)}^2 
	= \int_\Omega\na\diver(\rho_R\na(c+c_R))\cdot\na\Delta\rho_R dx \\
	&{}+ \int_\Omega\na\diver(\rho\na c_R)\cdot\na\Delta\rho_R dx =: I_7 + I_8, \\
	\frac{1}{2}\frac{d}{dt}&\|\Delta c_R\|_{L^2(\Omega)}^2
	+ \|\na\Delta c_R\|_{L^2(\Omega)}^2 + \|\Delta c_R\|_{L^2(\Omega)}^2 
  = \delta\int_\Omega\na\Delta(\rho+\rho_R)\cdot\na\Delta c_R dx \\
	&{}+ \int_\Omega\na\big((\rho+\rho_R)^\alpha-\rho^\alpha\big)\cdot\na\Delta c_R dx
	=: I_9 + I_{10}.
\end{align*}
We estimate
\begin{align*}
  I_8 &\le \eta\|\na\Delta\rho_R\|_{L^2(\Omega)}^2
	+ C(\eta)\|\na\diver(\rho\na c_R)\|_{L^2(\Omega)}^2
	+ C(\eta)\|\rho_R\|_{H^2(\Omega)}^2 \\
	&\le \eta\|\na\Delta\rho_R\|_{L^2(\Omega)}^2
	+ C(\eta)\|c_R\|_{H^2(\Omega)}^2
	+ C(\eta)\|\na\Delta c_R\|_{L^2(\Omega)}^2
	+ C(\eta)\|\rho_R\|_{H^2(\Omega)}^2.
\end{align*}
Taking into account
\begin{align*}
  \na\diver(\rho_R\na c_R)
	&= \na\big(\na\rho_R\cdot\na c_R + \rho_R\Delta c_R\big) \\
	&= (\na c_R\cdot\na)\na\rho_R + (\na\rho_R\cdot\na)\na c_R
	+ \na\rho_R\Delta c_R + \rho_R\na\Delta c_R,
\end{align*}
and inequalities \eqref{a.L2} and \eqref{a.H3} in the Appendix
as well as the embedding $H^2(\Omega)\hookrightarrow L^\infty(\Omega)$, we obtain
\begin{align*}
  \|\na\diver(\rho_R\na c_R)\|_{L^2(\Omega)}^2
	&\le \|\na c_R\|_{L^\infty(\Omega)}^2\|\na^2\rho_R\|_{L^2(\Omega)}^2
	+ \|\na\rho_R\|_{H^1(\Omega)}^2\|\na^2 c_R\|_{H^1(\Omega)}^2 \\
	&\phantom{xx}{}+ \|\na\rho_R\|_{H^1(\Omega)}^2\|\Delta c_R\|_{H^1(\Omega)}^2
	+ \|\rho_R\|_{L^\infty(\Omega)}^2\|\na\Delta c_R\|_{L^2(\Omega)}^2 \\
	&\le C\|\rho_R\|_{H^2(\Omega)}^2\big(\|\na\Delta c_R\|_{L^2(\Omega)}^2
	+ \|c_R\|_{H^2(\Omega)}^2\big),
\end{align*}
which gives
$$
  I_7 \le \eta\|\na\Delta c_R\|_{L^2(\Omega)}^2
	+ C(\eta)\|\rho_R\|_{H^2(\Omega)}^2 
	+ C(\eta)\|\rho_R\|_{H^2(\Omega)}^2\big(\|\na\Delta c_R\|_{L^2(\Omega)}^2
	+ \|c_R\|_{H^2(\Omega)}^2\big).
$$
This shows that, again for sufficiently small $\eta>0$,
\begin{align}
  \frac12\frac{d}{dt}\|\Delta\rho_R\|_{L^2(\Omega)}^2
	+ C\|\na\Delta\rho_R\|_{L^2(\Omega)}^2 
  &\le C\|(\rho_R,c_R)\|_{H^2(\Omega)}^2 + C\big(1+\|\rho_R\|_{H^2(\Omega)}^2\big)
	\|\na\Delta c_R\|_{L^2(\Omega)}^2 \nonumber \\
	&\phantom{xx}{}+ C\|\rho_R\|_{H^2(\Omega)}^2\|c_R\|_{H^2(\Omega)}^2. \label{3.s4}
\end{align}

Next, we estimate $I_9$ and $I_{10}$:
\begin{align*}
  I_9 &\le \eta\|\na\Delta c_R\|_{L^2(\Omega)}^2 
	+ C(\eta)\delta^2\|\na\Delta\rho_R\|_{L^2(\Omega)}^2 + C\delta^2, \\
  I_{10} &= \alpha\int_\Omega\big((\rho+\rho_R)^{\alpha-1}-\rho^{\alpha-1})
	\na\rho\cdot\na\Delta c_R dx \\
	&\phantom{xx}{}
	+ \alpha\int_\Omega(\rho+\rho_R)^{\alpha-1}\na\rho_R\cdot\na\Delta c_R dx 
	= J_1 + J_2.
\end{align*}
We find that
$$
  J_2 \le \eta\|\na\Delta c_R\|_{l^2(\Omega)}^2
	+ C(\eta)\big(1+\|\rho_R\|_{H^2(\Omega)}^{2(\alpha-1)}\big)
	\|\na\rho_R\|_{L^2(\Omega)}^2.
$$
By the H\"older continuity of $s\mapsto s^{\alpha-1}$, it follows that
\begin{align*}
  J_2 &\le C\int_\Omega|\rho_R|^{\alpha-1}|\na\Delta c_R|dx
	\le \eta\|\na\Delta c_R\|_{L^2(\Omega)}^2
	+ C(\eta)\|\rho_R\|_{L^{2(\alpha-1)}(\Omega)}^{2(\alpha-1)} \\
	&\le \eta\|\na\Delta c_R\|_{L^2(\Omega)}^2
	+ C(\eta)\|\rho_R\|_{H^2(\Omega)}^{2(\alpha-1)}.
\end{align*}
Consequently, for sufficiently small $\eta>0$,
\begin{align}
  \frac{1}{2}\frac{d}{dt}&\|\Delta c_R\|_{L^2(\Omega)}^2
	+ C\|\na\Delta c_R\|_{L^2(\Omega)}^2 + \|\Delta c_R\|_{L^2(\Omega)}^2 \nonumber \\
	&\le C\|\rho_R\|_{H^2(\Omega)}^{2(\alpha-1)}
	+ C\big(1+\|\rho_R\|_{H^2(\Omega)}^{2(\alpha-1)}\big)\|\na\rho_R\|_{L^2(\Omega)}^2
	+ C\delta^2\|\na\Delta\rho_R\|_{L^2(\Omega)}^2 + C\delta^2. \label{3.cH3}
\end{align}

Adding the previous inequality and \eqref{3.s4} and taking $\delta>0$ sufficiently
small such that the term $C\delta^2\|\na\Delta\rho_R\|_{L^2(\Omega)}^2$ is absorbed
by the corresponding term on the left-hand side of \eqref{3.s4}, we infer that
\begin{align}
  \frac{d}{dt}&\|(\rho_R,c_R)\|_{L^2(\Omega)}^2
	+ C\|\na\Delta(\rho_R,c_R)\|_{L^2(\Omega)}^2 + C\|\Delta c_R\|_{L^2(\Omega)}^2 
	\nonumber \\
  &\le C\|(\rho_R,c_R)\|_{H^2(\Omega)}^2	+ C\big(1+\|\rho_R\|_{H^2(\Omega)}^2\big)
	\|\na\Delta c_R\|_{L^2(\Omega)}^2 
	+ C\|\rho_R\|_{H^2(\Omega)}^2\|c_R\|_{H^2(\Omega)}^2 \nonumber \\
	&\phantom{xx}{}+ C\|\rho_R\|_{H^2(\Omega)}^{2(\alpha-1)}
	+ C\big(1+\|\rho_R\|_{H^2(\Omega)}^{2(\alpha-1)}\big)\|\na\rho_R\|_{L^2(\Omega)}^2
	+ C\delta^2 \nonumber \\
	&\le C\big(\Gamma(t) + \Gamma(t)^{\alpha-1} + \Gamma(t)^2 + \Gamma(t)^\alpha
	+ \Gamma(t)G(t)\big) + C\delta^2. \label{3.ineq3}
\end{align}

{\em Step 6: End of the proof for $\eps=1$.}
We sum inequalities \eqref{3.ineq1}, \eqref{3.ineq2}, and \eqref{3.ineq3}:
\begin{align}
  \frac{d}{dt}\big(&\|(\rho_R,c_R)\|_{H^1(\Omega)}^2
	+ \|\Delta(\rho_R,c_R)\|_{L^2(\Omega)}^2\big) \nonumber \\
	&\phantom{xx}{}+ C\big(\|(\rho_R,c_R)\|_{H^2(\Omega)}^2
	+ \|\na\Delta(\rho_R,c_R)\|_{L^2(\Omega)}^2\big) \nonumber \\
	&\le C\big(\Gamma(t) + \Gamma(t)^{\alpha-1} + \Gamma(t)^2 + \Gamma(t)^\alpha
	+ \Gamma(t)G(t)\big) + C\delta^2. \label{3.s5}
\end{align}
To get rid of the term $\Gamma(t)^{\alpha-1}$, we need the condition $\alpha\ge 2$.
Indeed, under this condition,
$$
  \Gamma(t)^{\alpha-1} \le \Gamma(t) + \Gamma(t)^\alpha.
$$
We also remove the term $\Gamma(t)^2$ by defining 
$\kappa:= \max\{\alpha,2\}$ and estimating
$$
  \Gamma(t)^2 \le \Gamma(t) + \Gamma(t)^{\kappa}.
$$
We deduce from elliptic regularity that
$$
  \Gamma(t) \le C\|\Delta(\rho_R,c_R)\|_{L^2(\Omega)}^2
	+ C\|(\rho_R,c_R)\|_{H^1(\Omega)}^2.
$$
Therefore, integrating \eqref{3.s5} over $(0,t)$ and observing that
$(\rho_R,c_R)(0)=0$, \eqref{3.s5} becomes
$$
  \Gamma(t) + C\int_0^t G(s)ds \le C\int_0^t(\Gamma(s)+\Gamma(s)^{\kappa})ds
	+ C\int_0^t \Gamma(s)G(s)ds + C\delta^2.
$$
Lemma \ref{lem.gronwall} proves the result for $\eps=1$.

{\em Step 7: Parabolic-elliptic case $\eps=0$.}
Since there is no time derivative of $c_R$ anymore, we need to change the
definition of the functionals $\Gamma(t)$ and $G(t)$:
$$
  \Gamma_0(t) = \|\rho_R\|_{H^2(\Omega)}^2, \quad
	G_0(t) = \|\rho_R\|_{H^2(\Omega)}^2 + \|\na\Delta\rho_R\|_{L^2(\Omega)}^2.
$$
The estimates are very similar to the parabolic-parabolic case with two exceptions:
In \eqref{3.ineq3}, we have estimated the terms $\|\rho_R\|_{H^2(\Omega)}^2
\|\na\Delta c_R\|_{L^2(\Omega)}^2$ and
$\|\rho_R\|_{H^2(\Omega)}^2\|c_R\|_{H^2(\Omega)}^{2}$ from above by $\Gamma(t)G(t)$.
In the present case, we cannot estimate $\|\na\Delta c_R\|_{L^2(\Omega)}^2$ 
by $G_0(t)$ and we need to proceed in a different way.

Estimates \eqref{3.cH2} and \eqref{3.cH3}, adapted to the case $\eps=0$, become
\begin{align*}
  \|\Delta c_R\|_{L^2(\Omega)}^2 + \|\na c_R\|_{L^2(\Omega)}^2
	&\le C\big(1+\|\rho_R\|_{H^2(\Omega)}^{2(\alpha-1)}\big)\|\rho_R\|_{L^2(\Omega)}^2
	+ C\delta^2\|\Delta\rho_R\|_{L^2(\Omega)}^2 + C\delta^2 \\
	&\le C\big(\Gamma_0(t) + \Gamma_0(t)^\alpha\big) + C\delta^2, \\
  \|\na\Delta c_R\|_{L^2(\Omega)}^2 + \|\Delta c_R\|_{L^2(\Omega)}^2 
 	&\le C\|\rho_R\|_{H^2(\Omega)}^{2(\alpha-1)}
	+ C\big(1+\|\rho_R\|_{H^2(\Omega)}^{2(\alpha-1)}\big)\|\na\rho_R\|_{L^2(\Omega)}^2 \\
	&\phantom{xx}{}+ C\delta^2\|\na\Delta\rho_R\|_{L^2(\Omega)}^2 + C\delta^2.
\end{align*}
The term $\delta^2\|\na\Delta\rho_R\|_{L^2(\Omega)}^2$ can be absorbed
by the corresponding term on the left-hand side of \eqref{3.s4}. The
critical term $\|\rho_R\|_{H^2(\Omega)}^{2(\alpha-1)}\|\na\rho_R\|_{L^2(\Omega)}^2$
is bounded from above by $\Gamma(t)^\alpha$. Thus, $\|\na\Delta c_R\|_{L^2(\Omega)}^2$
is estimated by $\Gamma_0(t)^\alpha$ and lower-order terms, and consequently,
$\|\rho_R\|_{H^2(\Omega)}^2$ $\times\|\na\Delta c_R\|_{L^2(\Omega)}^2$ in \eqref{3.s4}
is estimated by $\Gamma_0(t)^\alpha G_0(t)$, together with lower-order terms. 
Furthermore, $\|\rho_R\|_{H^2(\Omega)}^2\|c_R\|_{H^2(\Omega)}^{2}$ is bounded by
$\Gamma_0(t)^{\alpha+1}$, up to lower-order terms. More precisely,
a computation shows that
\begin{align*}
  \frac{d}{dt}&\big(\|\rho_R\|_{H^1(\Omega)}^2 + \|\Delta\rho_R\|_{L^2(\Omega)}^2\big)
	+ C\big(\|\rho_R\|_{H^2(\Omega)}^2 + \|\na\Delta\rho_R\|_{L^2(\Omega)}^2\big)
	+ CG_0(t) \\
  &\le C\big(\Gamma_0(t) + \Gamma_0(t)^2 + \Gamma_0(t)^{\alpha-1} + \Gamma_0(t)^\alpha
	+ \Gamma_0(t)^{\alpha+1} + \Gamma_0(t)G_0(t) \\
	&\phantom{xx}{}+ \Gamma_0(t)^\alpha G_0(t)
	+ \delta^2 G_0(t)\big) + C\delta^2.
\end{align*}
Observing that 
$\Gamma_0(t)\le C(\|\rho_R\|_{H^1(\Omega)}^2 + \|\Delta\rho_R\|_{L^2(\Omega)}^2)$
and $\Gamma_0(t)^{\alpha-1}+\Gamma_0(t)^\alpha\le \Gamma_0(t)+\Gamma_0(t)^{\alpha+1}$, 
choosing $\delta>0$ sufficiently small, integrating in time, and using
$\Gamma_0(0)=0$, we arrive at
\begin{align*}
  \Gamma_0(t) + C\int_0^t G_0(s)ds 
	&\le C\int_0^t(\Gamma_0(s)+\Gamma_0(t)^{\alpha+1})ds \\
	&\phantom{xx}{}+ C\int_0^t(\Gamma_0(s)+\Gamma_0(s)^{\alpha+1})G_0(s)ds + C\delta^2.
\end{align*}
An application of Lemma \ref{lem.gronwall} finishes the proof.


\section{Numerical experiments}\label{sec.num}

We present some numerical examples for system \eqref{1.eq}--\eqref{1.bic} 
in two space dimensions and for various choices of the parameters 
$\alpha$ and $\delta$. Equations \eqref{1.eq} are discretized by the implicit
Euler method in time and by cubic finite elements in space. 
The scheme is implemented by using the finite-element library NGSolve/Netgen 
(http://ngsolve.org). The mesh is refined in regions where large gradients
are expected. The number of vertices is between 2805 and 12,448, and the number
of elements is between 5500 and 24,030. The time step is chosen between
$10^{-3}$ and $10^{-4}$ when no blow up is expected and is decreased down to
$10^{-13}$ close to expected blow-up times.
The resulting nonlinear discrete systems are solved by
the standard Newton method. The Jacobi matrix is computed by the NGSolve routine
{\tt AssembleLinearization}. The surface plots are generated by the Python package
Matplotlib \cite{Hun07}. We do not use any kind of additional regularizations,
smoothing tools, or slope-limiters. All experiments are performed for the
parabolic-parabolic equations with $\delta>0$.

We choose the same domain and initial conditions as in \cite{CHJ12}, i.e.
$\Omega=\{x\in\R^2:|x|<1\}$ and 
\begin{equation}\label{4.rho0}
  \rho^0(x,y) = 80(x^2+y^2-1)^2(x-0.1)^2 + 5, \quad
	c^0(x,y) = 0, \quad (x,y)\in\Omega.
\end{equation}
A computation shows that the total mass $M=\int_\Omega\rho^0 dx=25\pi/3>8\pi$
is supercritical, i.e., the solution to the classical Keller--Segel system
can blow up in the interior of the domain. A sufficient condition is that
the initial density is suffíciently concentrated in the sense that
$\int_\Omega|x-x_0|^2\rho^0 dx$ is sufficiently small for some $x_0\in\Omega$.
Blow up at the boundary can occur if $x_0\in\pa\Omega$ and $M>4\pi$.

{\em Experiment 1: $\alpha=1$.} We choose the initial datum \eqref{4.rho0}
and the values $\alpha=1$, $\delta=10^{-3}$. In this nonsymmetric setting, 
the solution exists for all time and the density is expected to concentrate 
at the boundary \cite{HiJu11}.
Figure \ref{fig.alpha1} shows the surface plots for the cell density at various times.
Since the total mass is initially concentrated near the boundary, we observe
a boundary peak. Observe that there is no $L^\infty$ blow-up. 
The steady state is reached at approximately $T=2.5$.
By Theorem \ref{thm.pp}, the peak approximates the blow-up solution to the
classical Keller--Segel system in the $L^\infty$ norm;
see Figure \ref{fig.delta1}.
We see that the $L^\infty$ norm of the density becomes larger with decreasing values
of $\delta$.

\begin{figure}[ht]
\includegraphics[width=80mm]{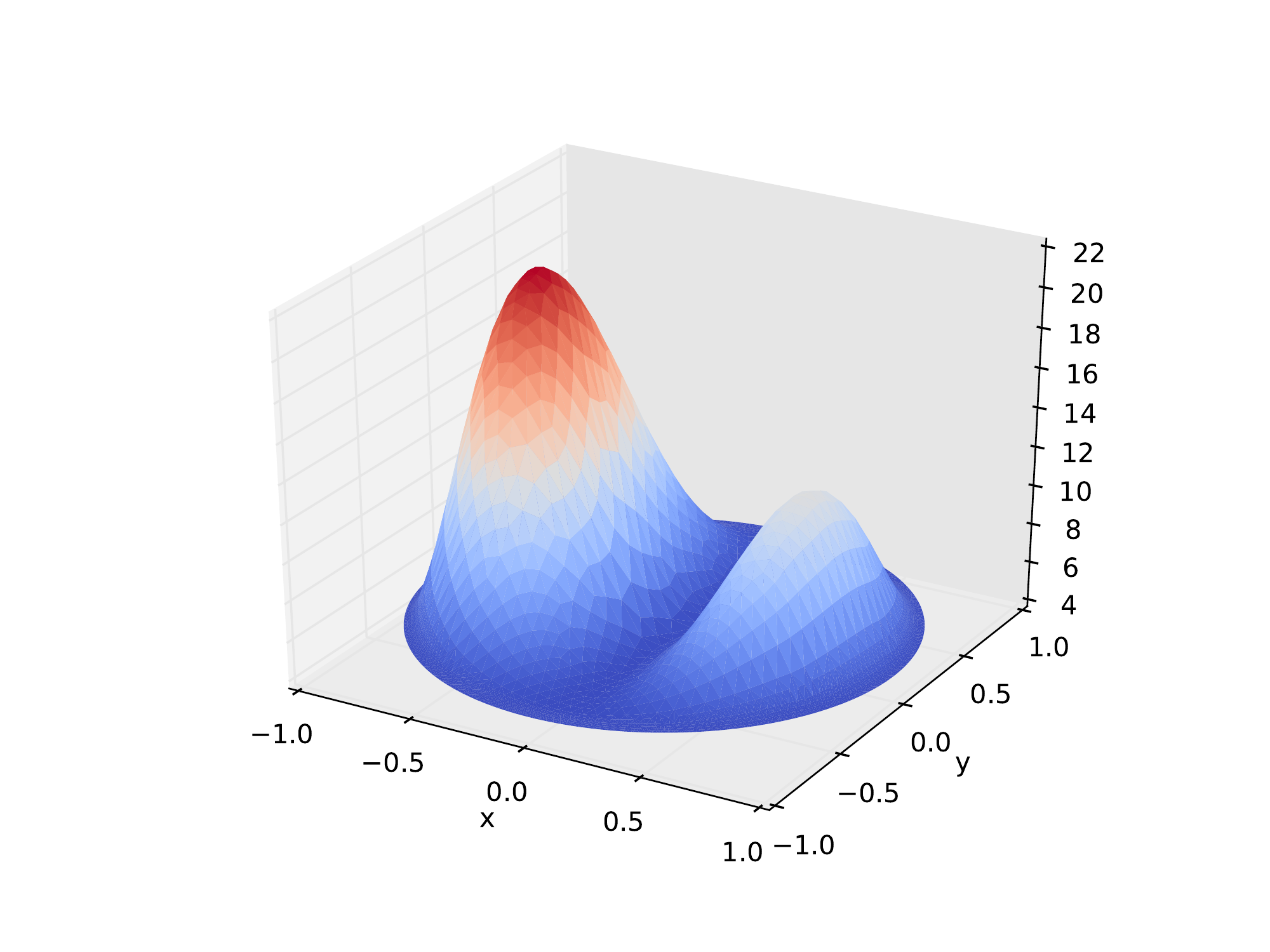}
\includegraphics[width=80mm]{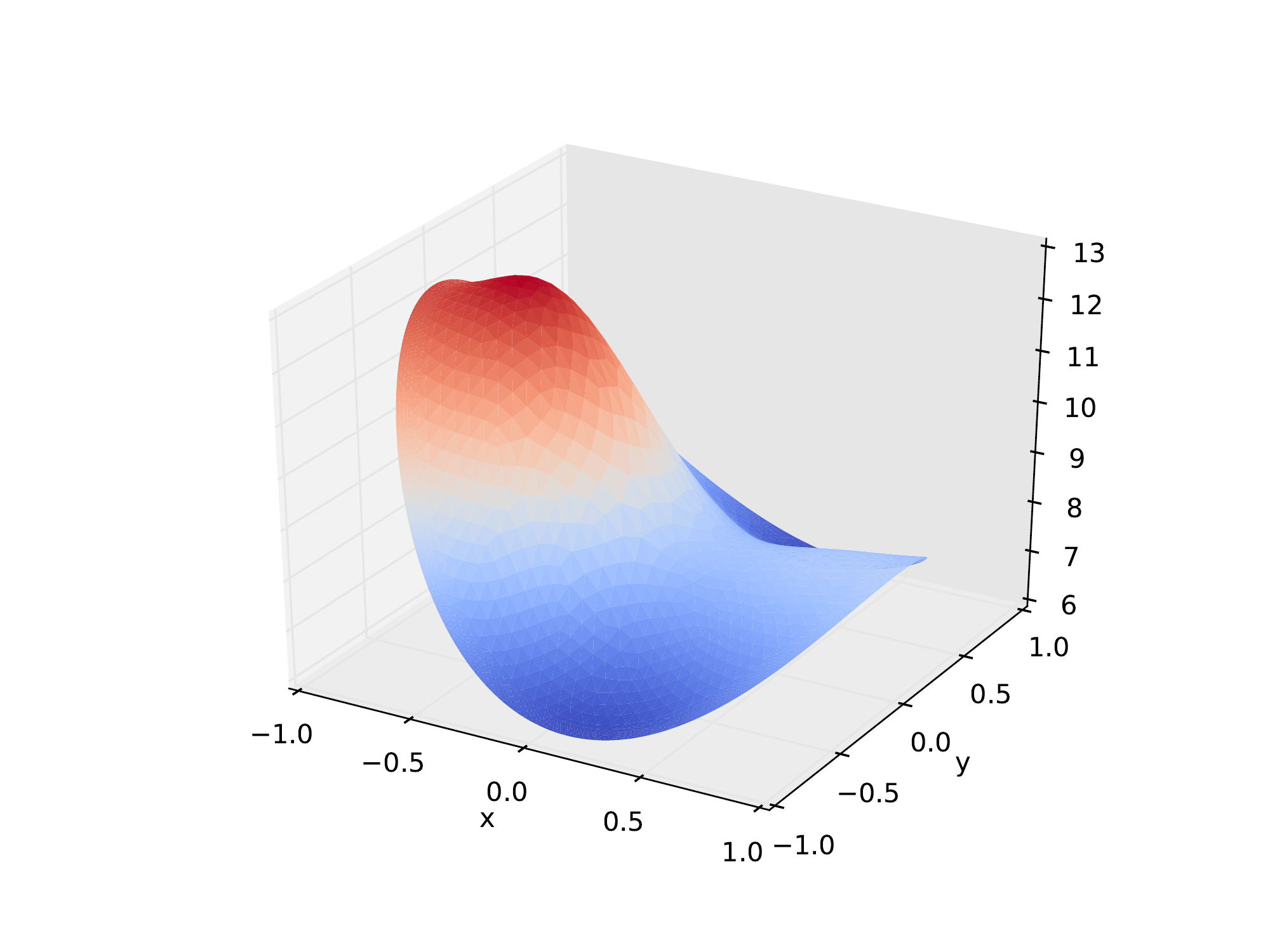}
\includegraphics[width=80mm]{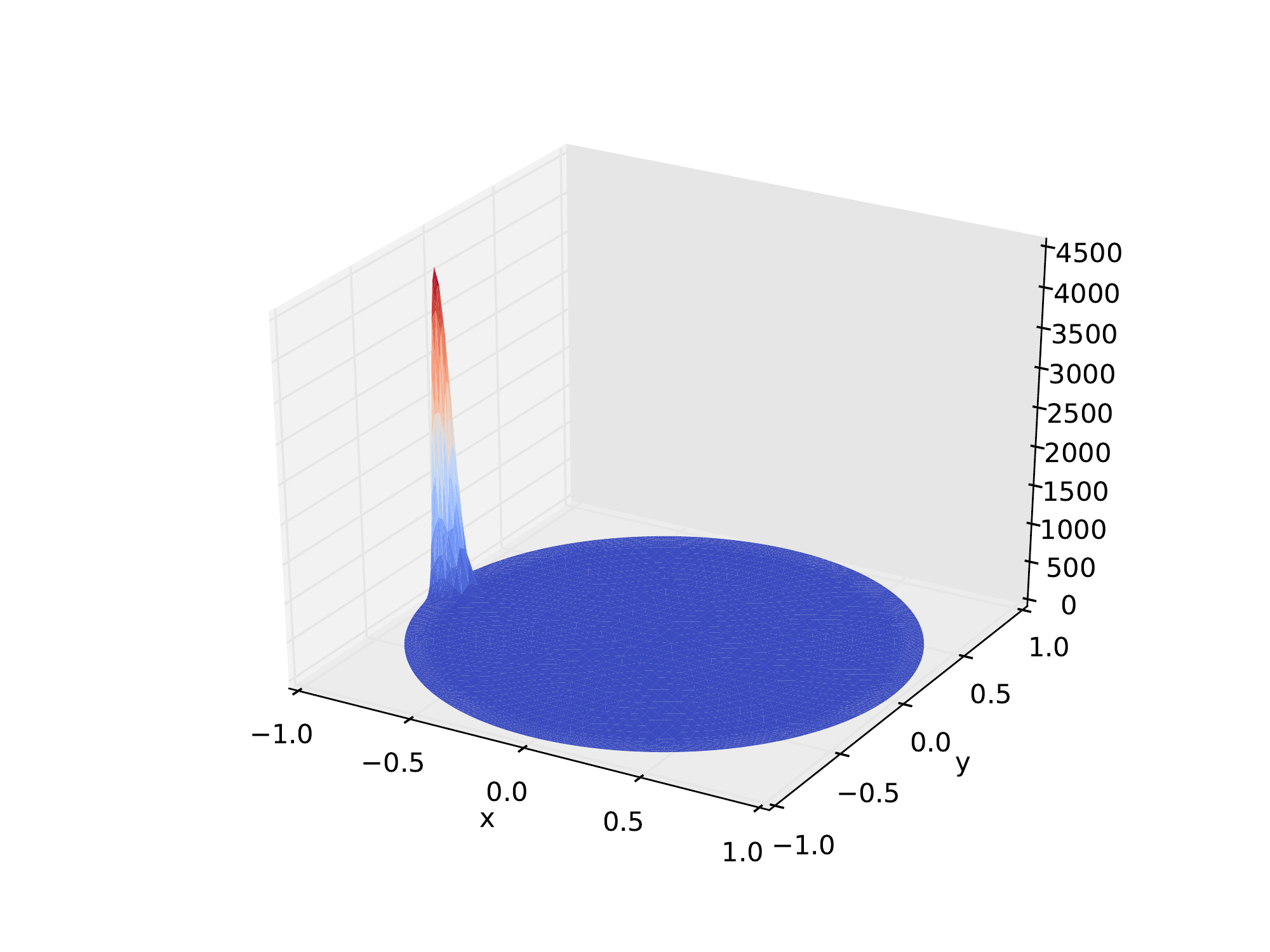}
\includegraphics[width=80mm]{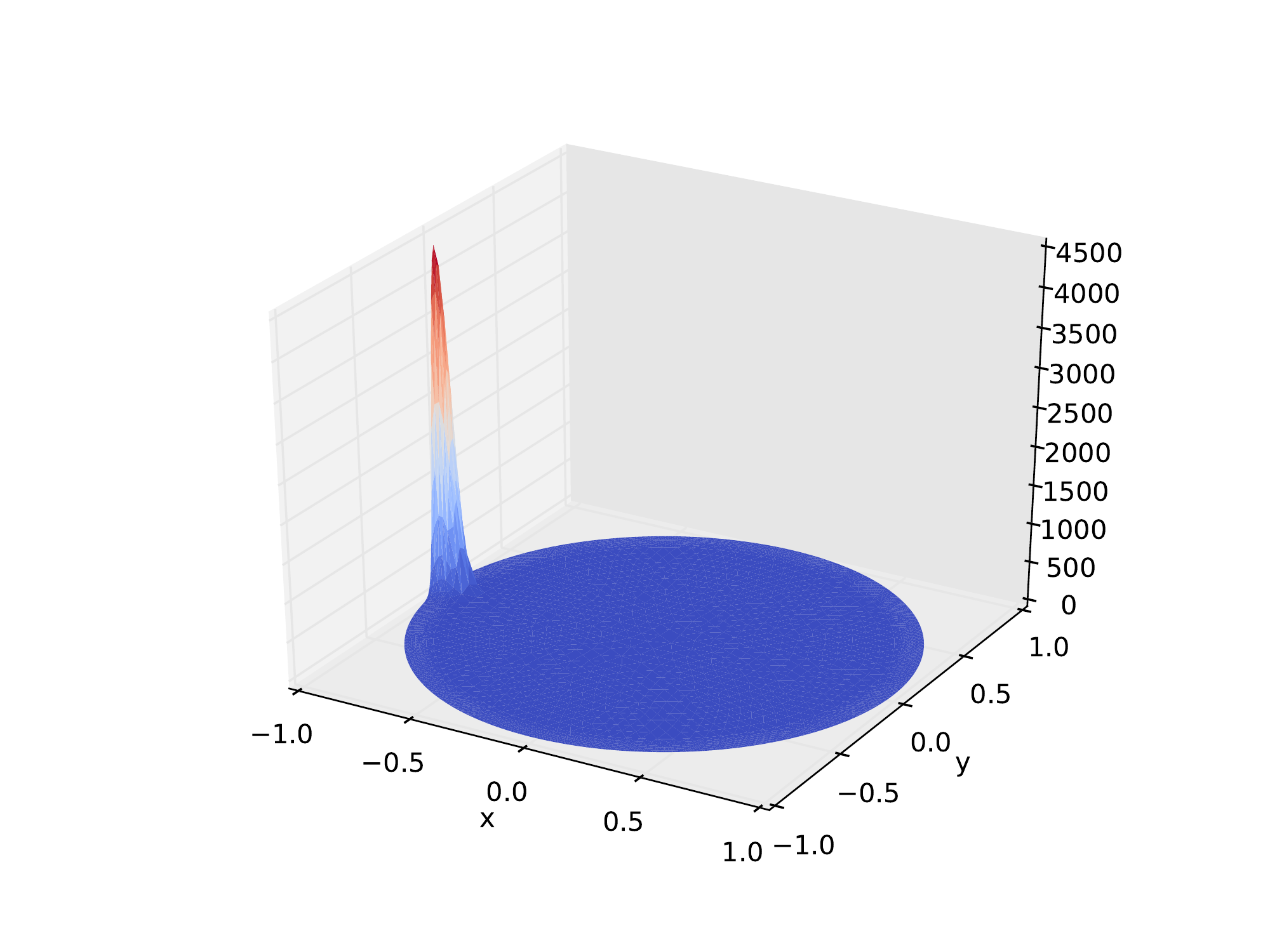}
\caption{Cell density with $\alpha=1$ and $\delta=10^{-3}$
at times $t=0$ (top left), $t=0.1$ (top right), $t=2.0$ (bottom left), 
$t=5.0$ (bottom right).}
\label{fig.alpha1}
\end{figure}

\begin{figure}[ht]
\includegraphics[width=80mm]{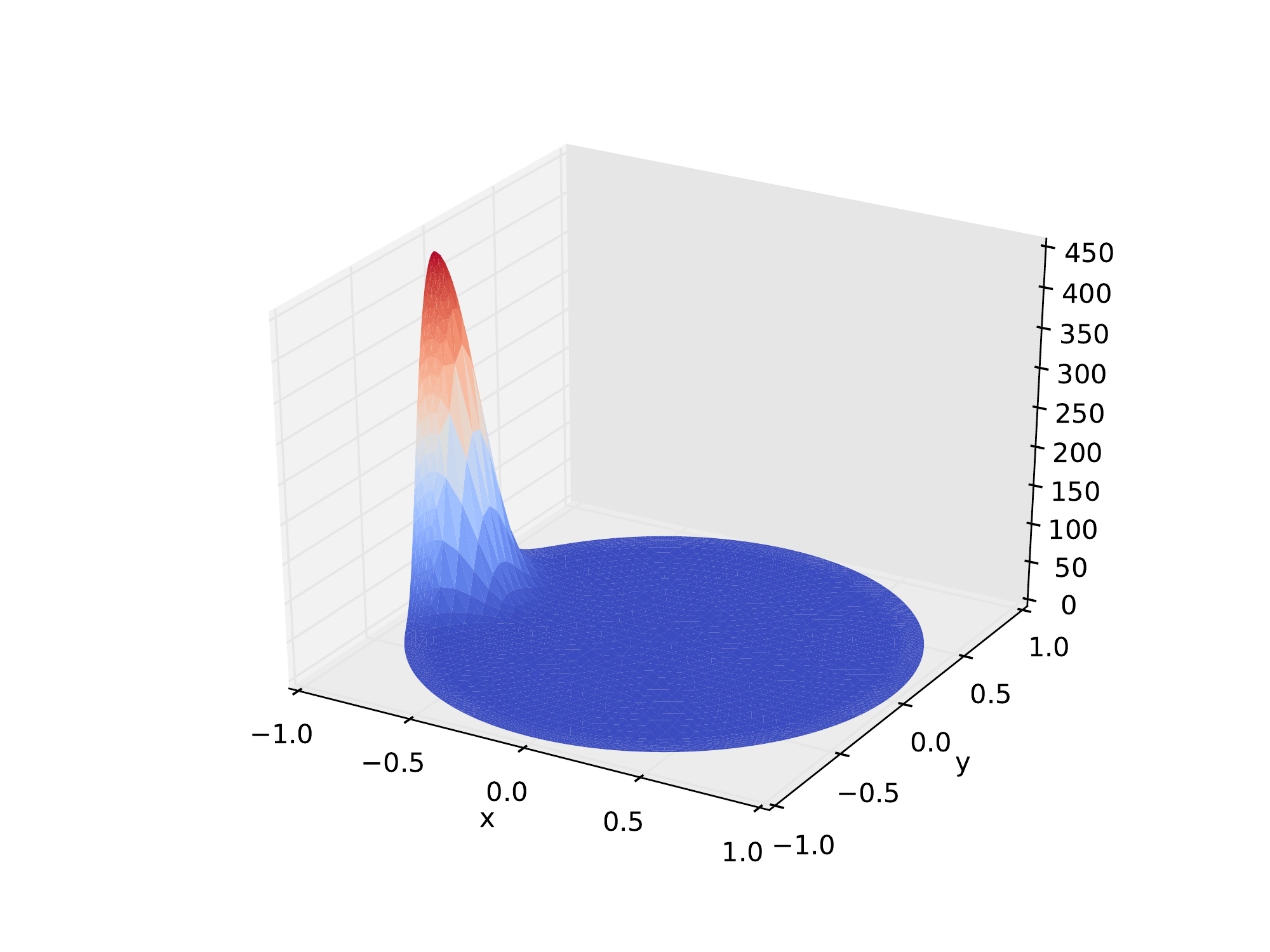}
\includegraphics[width=80mm]{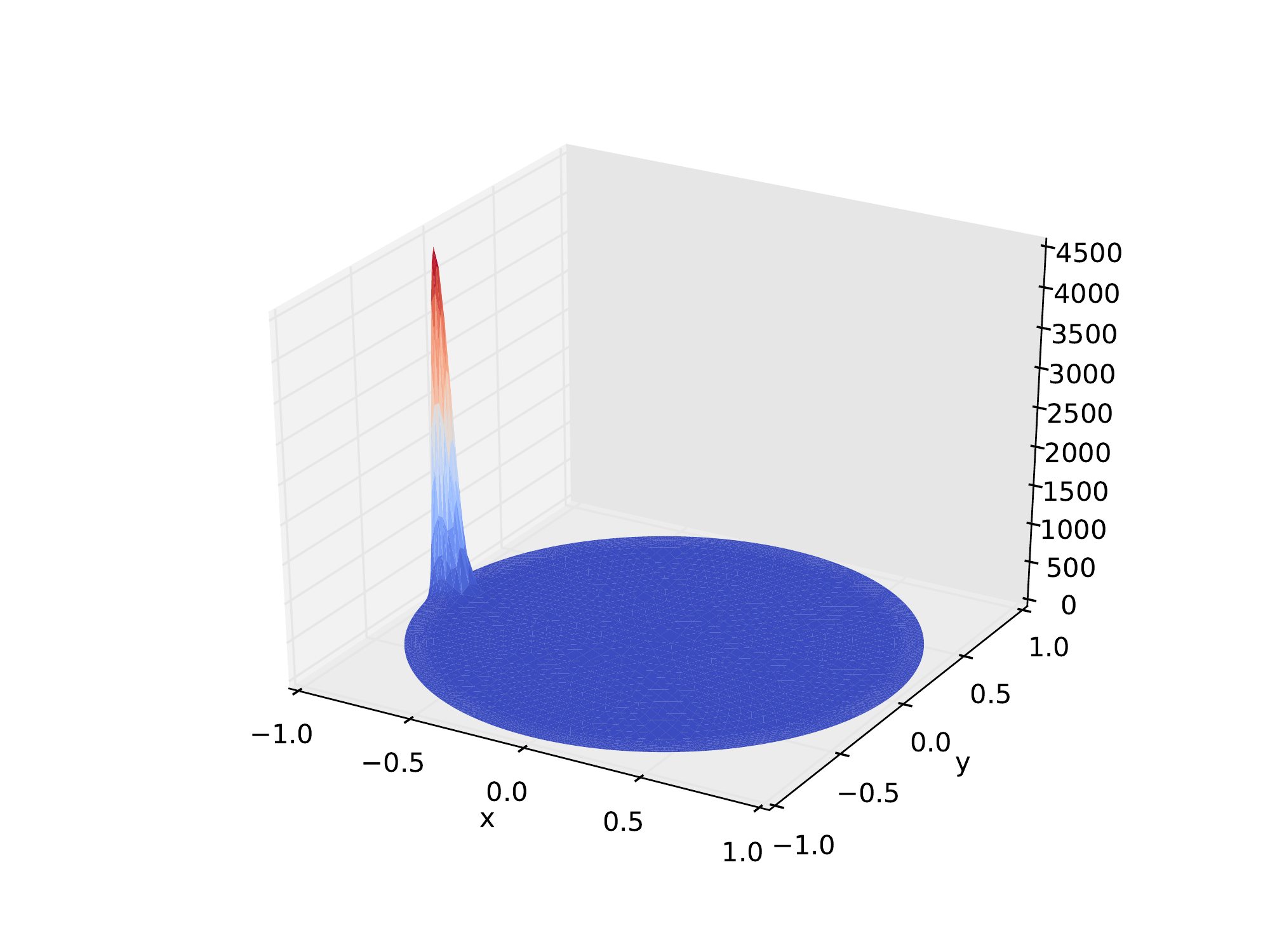}
\includegraphics[width=80mm]{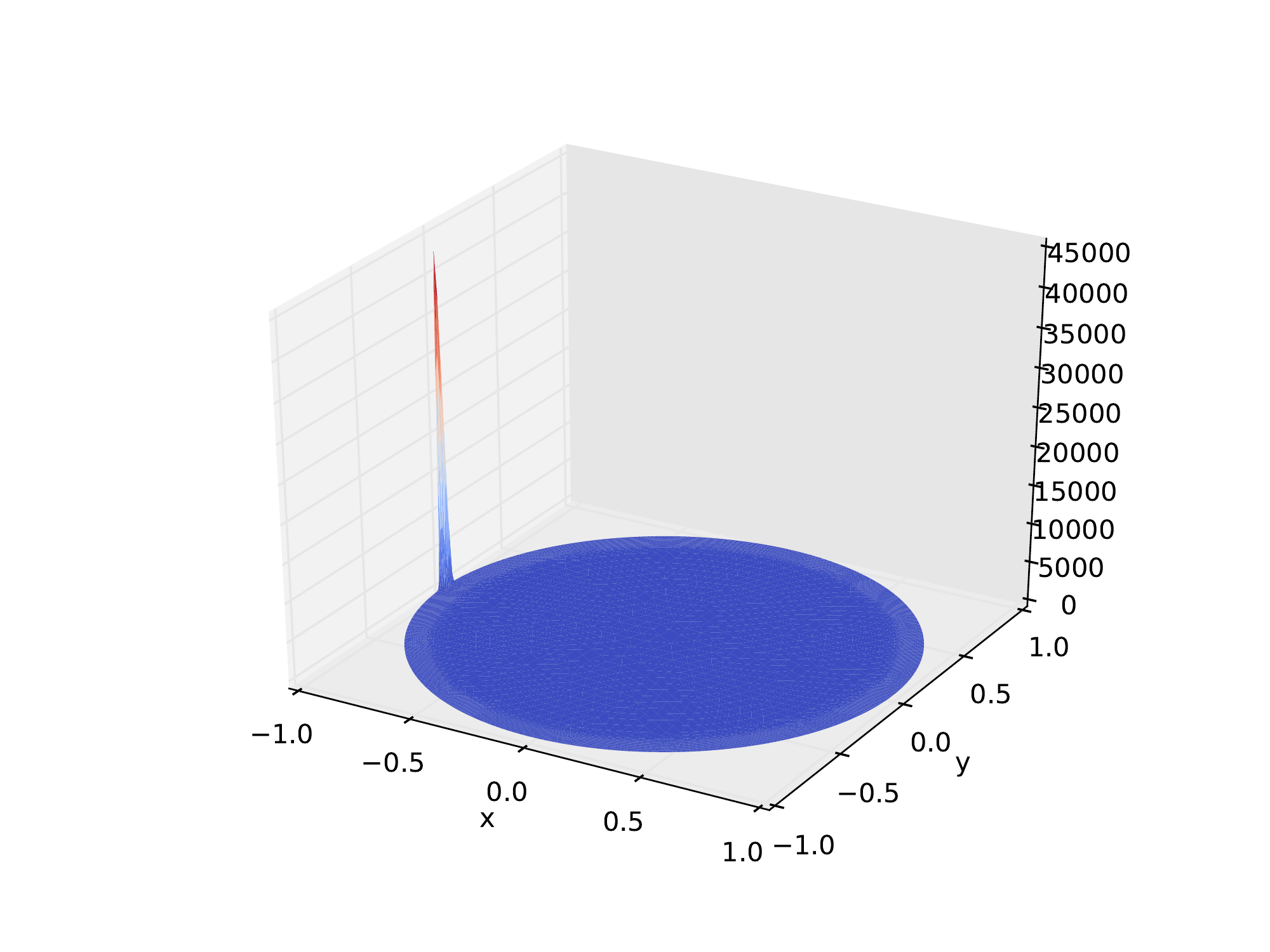}
\includegraphics[width=75mm]{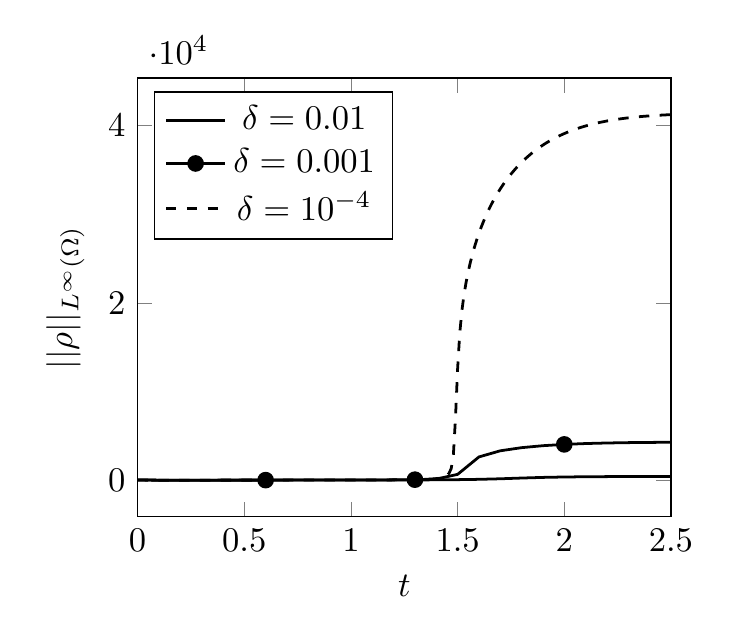}
\caption{Cell density at time $T=2.5$ with $\alpha=1$ and 
$\delta=10^{-2}$ (top left), $\delta=10^{-3}$ (top right), $\delta=10^{-4}$
(bottom left). The $L^\infty$ norm of the density is shown in the bottom right panel.}
\label{fig.delta1}
\end{figure}

{\em Experiment 2: $\alpha>1$.} First, we choose
the value $\alpha=1.5$. The initial datum is still given by \eqref{4.rho0}. 
Since $\alpha>1$, we cannot exclude finite-time blow-up, which is
confirmed by the numerical experiments in Figure \ref{fig.alpha15}. Numerically,
the solution seems to exist until time $T^*\approx 0.079$. The numerical scheme
breaks down at slightly smaller times when $\delta$ becomes smaller. This may indicate
that the numerical break-down is an upper bound for the blow-up time of the
classical Keller--Segel model. The break-down time becomes smaller for larger
values of $\alpha$. Indeed, Figure \ref{fig.alpha25} shows a stronger and faster
concentration behavior when we take $\alpha=2.5$.

\begin{figure}[ht]
\includegraphics[width=80mm]{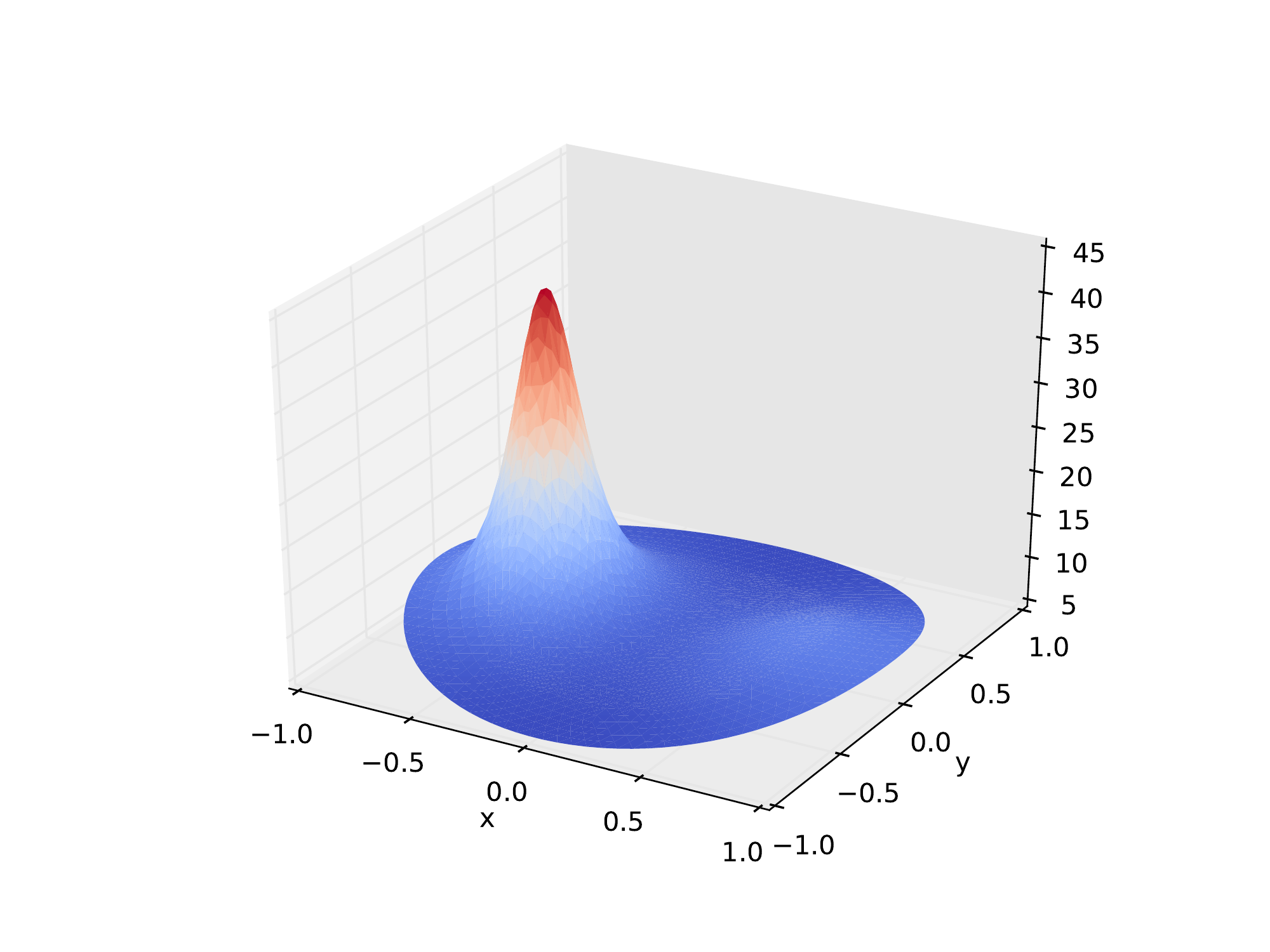}
\includegraphics[width=80mm]{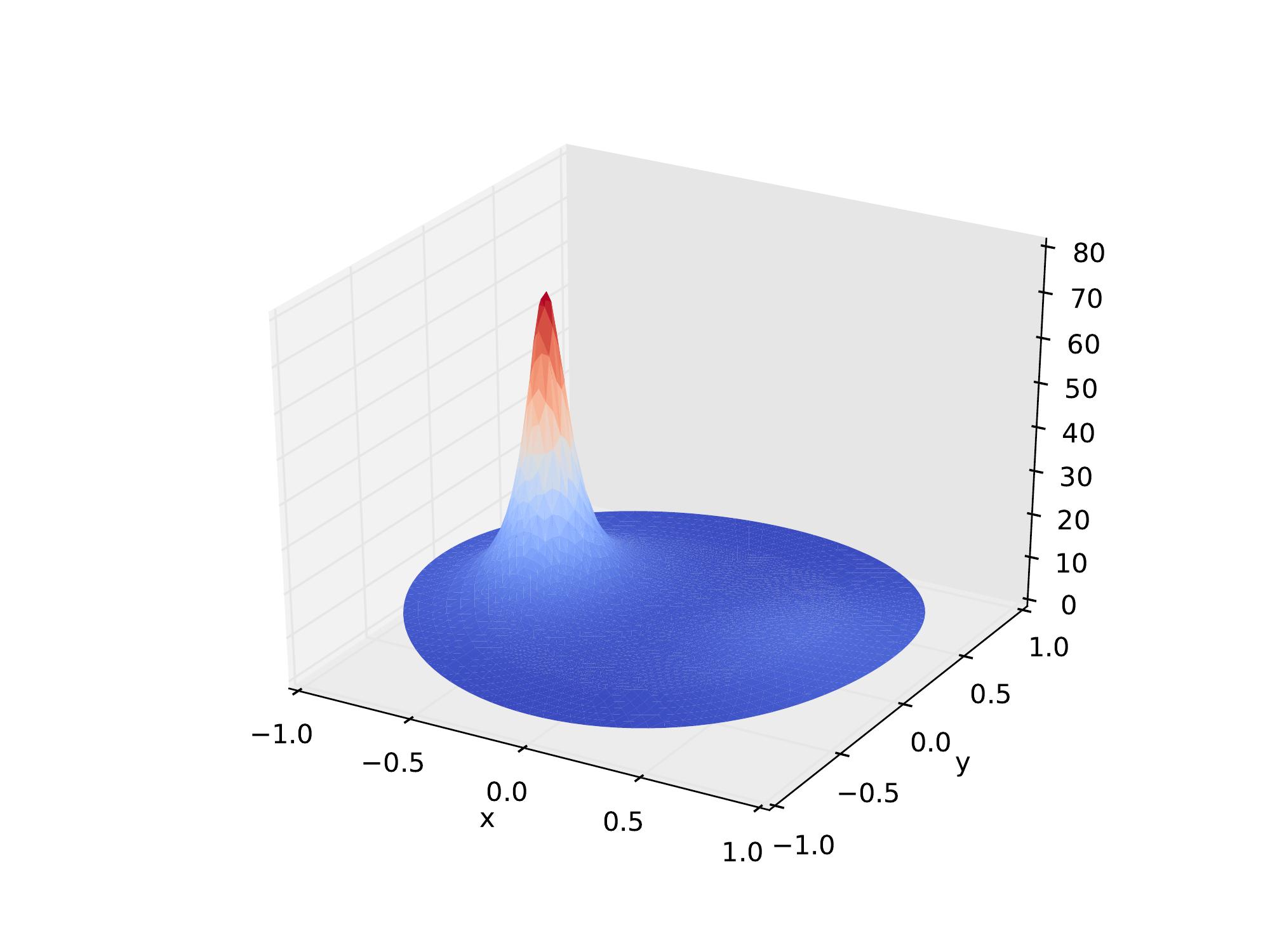}
\includegraphics[width=80mm]{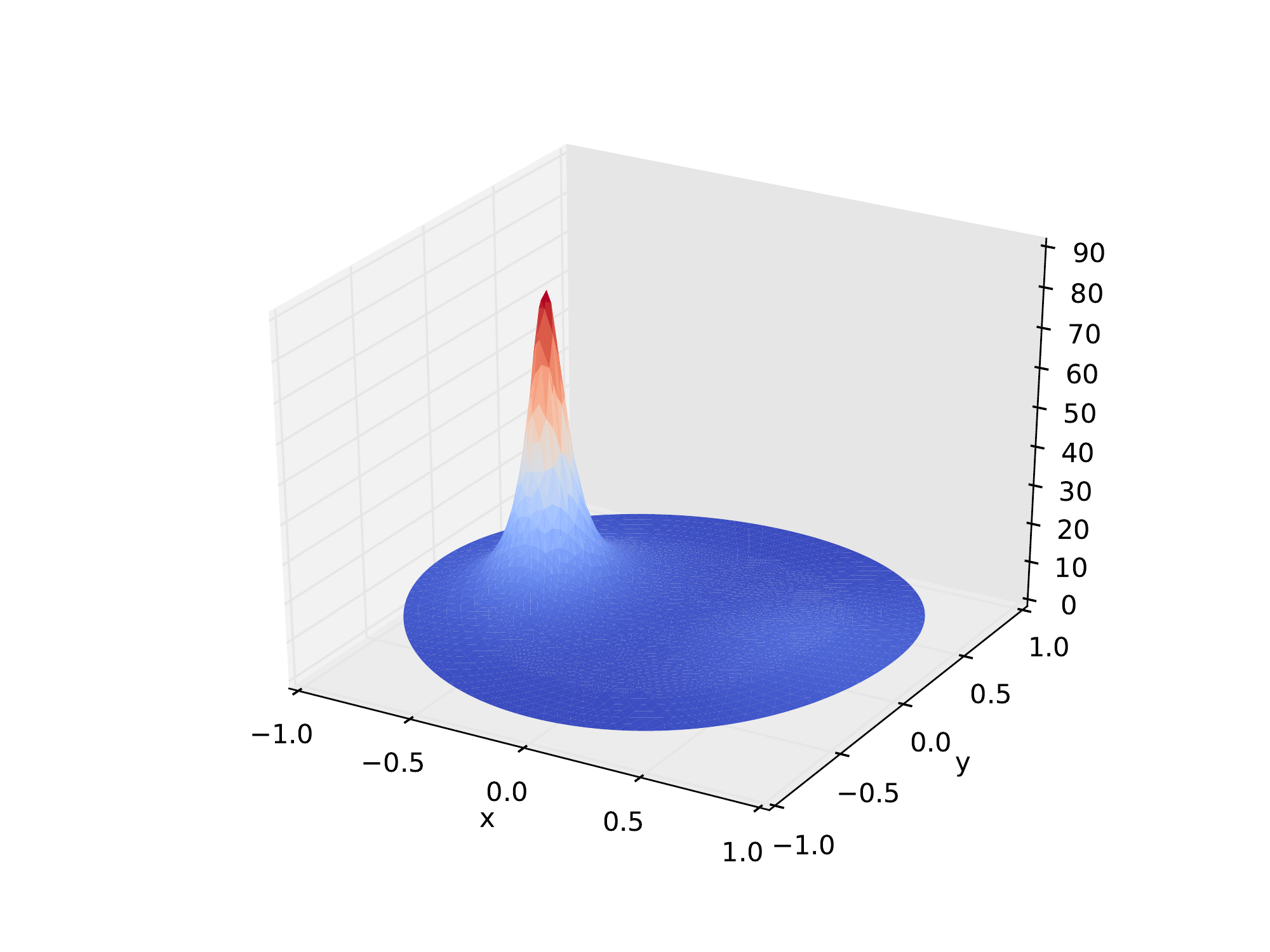}
\includegraphics[width=75mm]{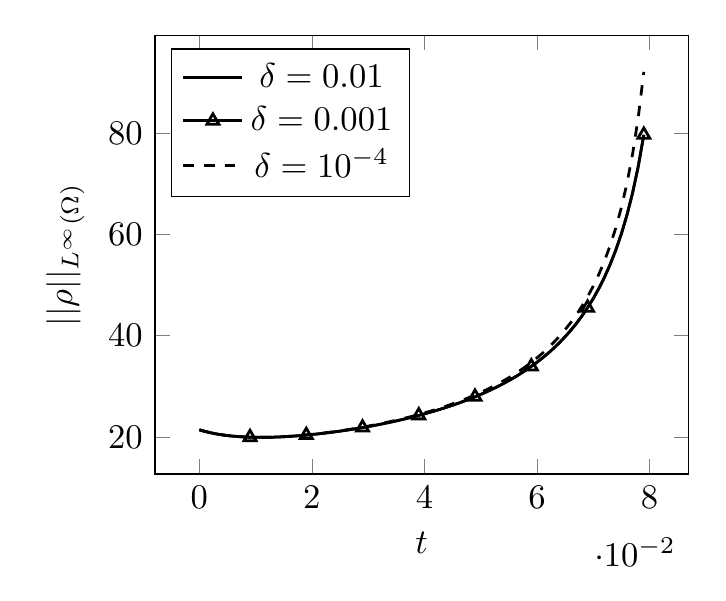}
\caption{Cell density at time $T^*=0.079$ with $\alpha=1.5$ and 
$\delta=10^{-2}$ (top left), $\delta=10^{-3}$ (top right), $\delta=10^{-4}$
(bottom left). The $L^\infty$ norm of the density is shown in the bottom right panel.}
\label{fig.alpha15}
\end{figure}

\begin{figure}[ht]
\includegraphics[width=80mm]{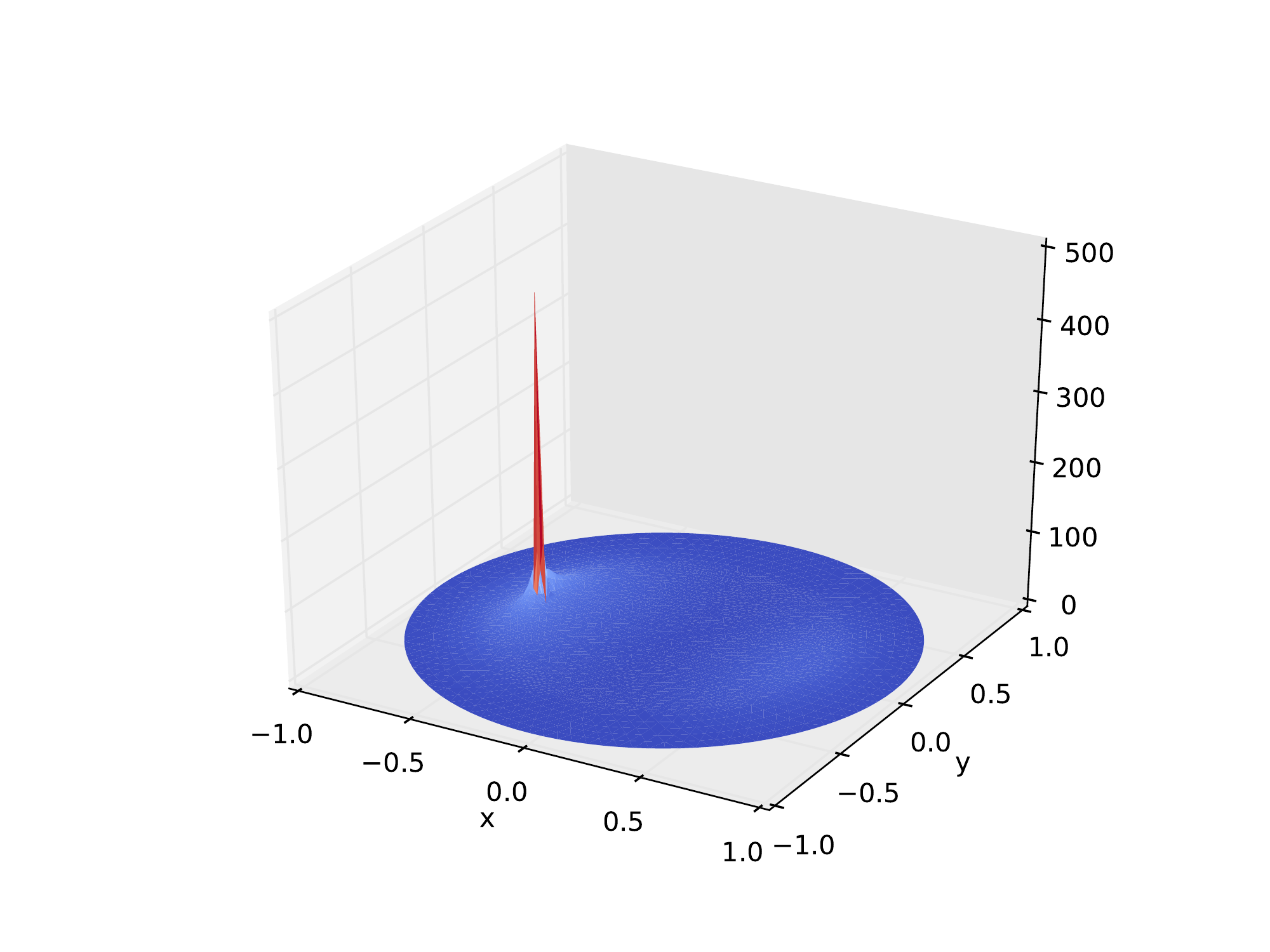}
\includegraphics[width=80mm]{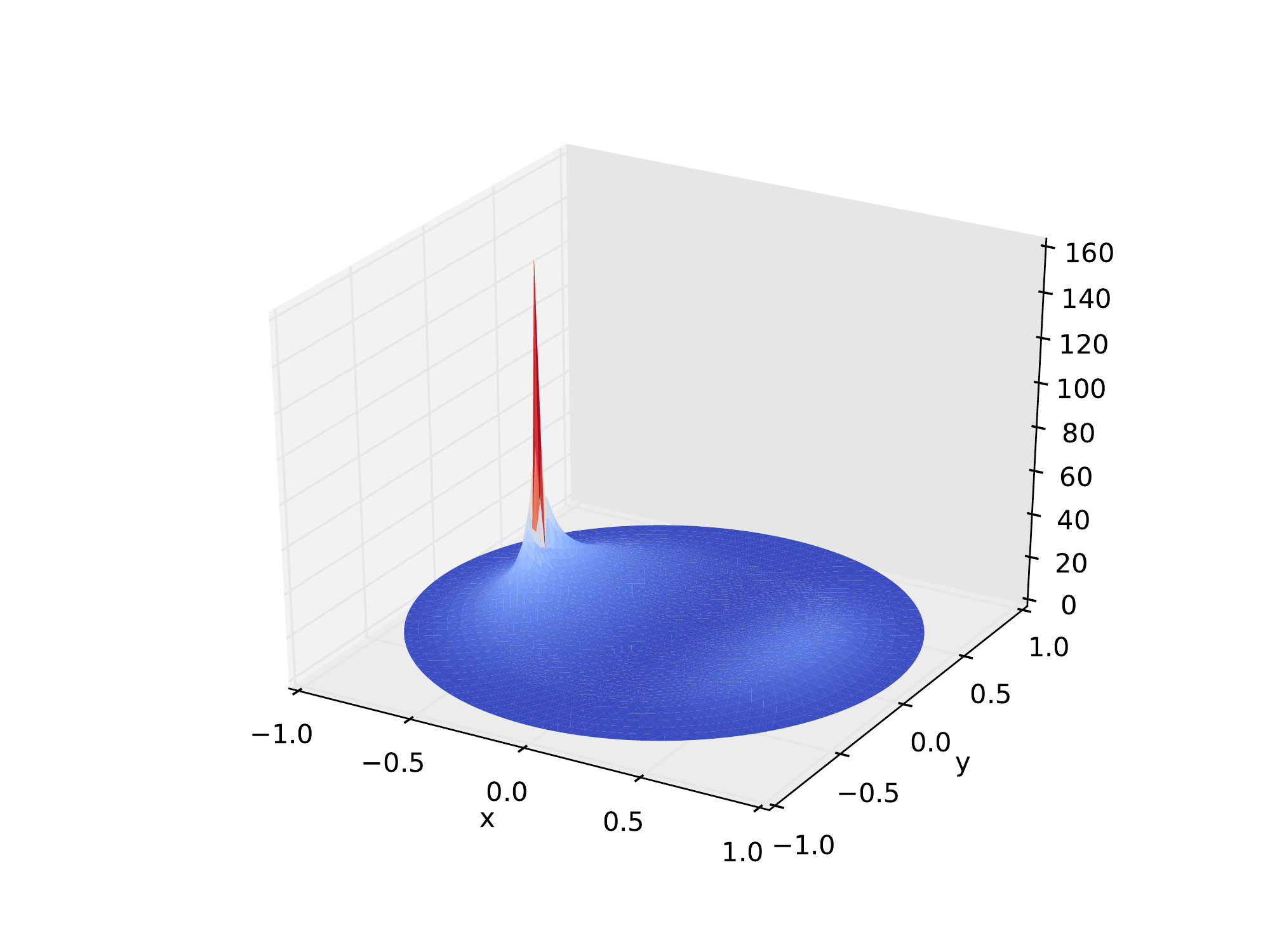}
\includegraphics[width=80mm]{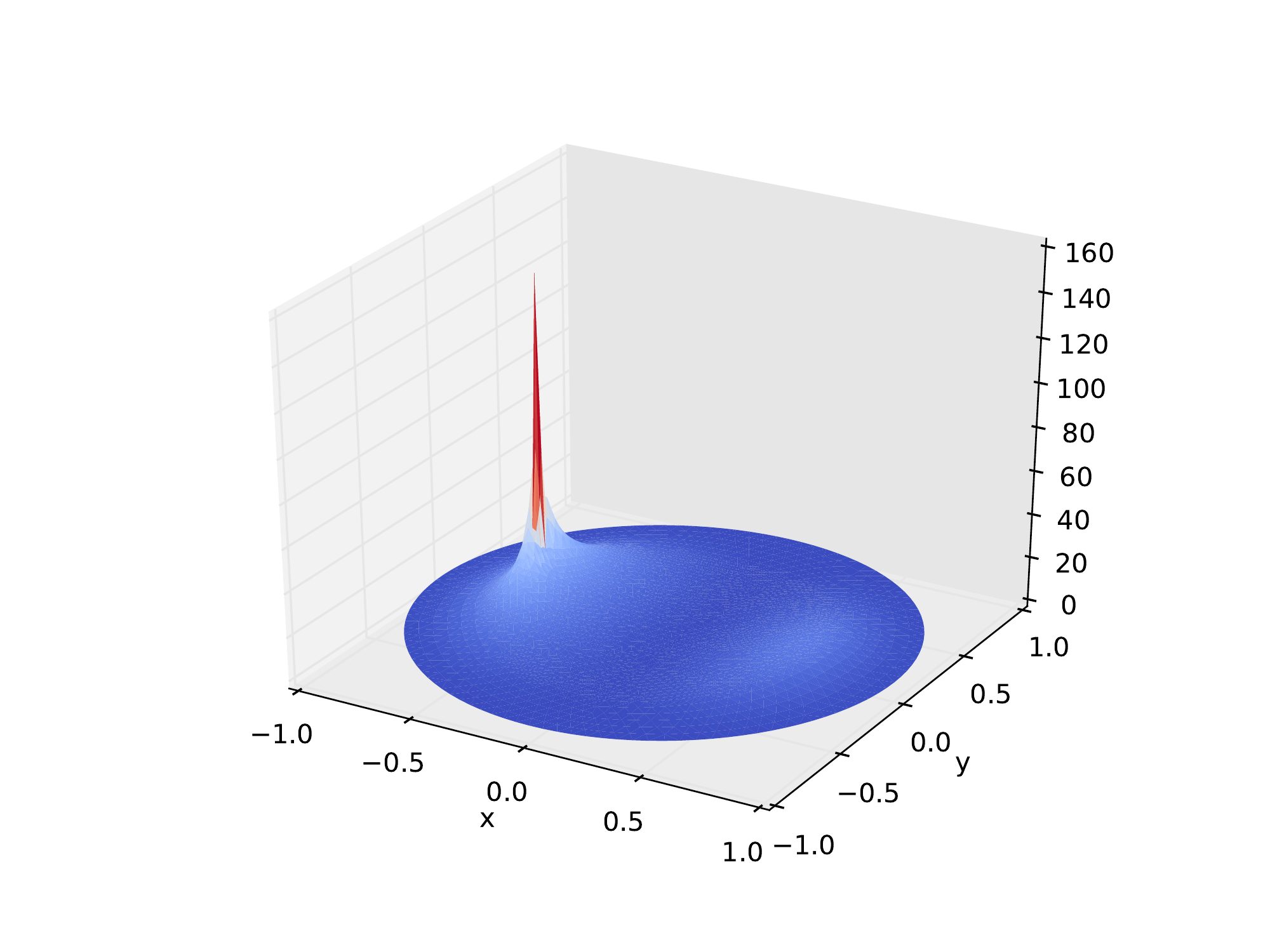}
\includegraphics[width=75mm]{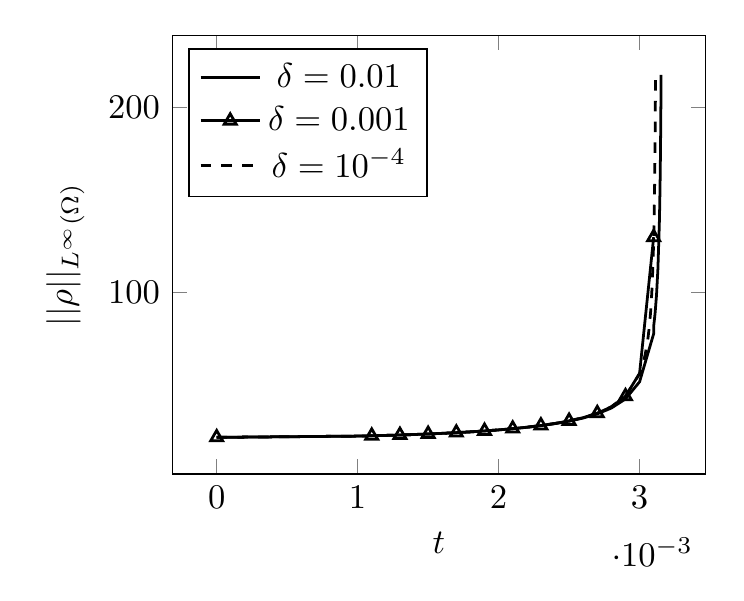}
\caption{Cell density at time $T^*=3.35\cdot 10^{-3}$ with $\alpha=2.5$ 
and $\delta=10^{-2}$ (top left), $\delta=10^{-3}$ (top right), $\delta=10^{-4}$
(bottom left). The $L^\infty$ norm of the density is shown in the bottom right panel.}
\label{fig.alpha25}
\end{figure}

{\em Experiment 3: Multi-bump initial datum.} We take $\alpha=1$ and
$\delta=5\cdot 10^{-3}$. As initial datum, we choose a linear combination of
the bump function
$$
  W_{x_0,y_0,M}(x,y) = \frac{M}{2\pi\theta}\exp\bigg(
	-\frac{(x-x_0)^2+(y-y_0)^2}{2\theta}\bigg), \quad(x,y)\in\Omega,
$$
where $(x_0,y_0)\in\Omega$, $M>0$, and $\theta>0$. Setting $\theta=10^{-2}$,
we define $c^0=0$ and
\begin{align*}
  \rho^0 &= W_{0.25,0,10\pi} + W_{-0.25,0,4\pi} + W_{0,-0.25,4\pi} + W_{0,0.25,4\pi} \\
	&\phantom{xx}{}+ W_{0,0.5,4\pi} + W_{0,0.35,4\pi} + W_{0.5,0,4\pi} 
	+ W_{0.5,0.25,4\pi}.
\end{align*}
The evolution of the density is presented in Figure \ref{fig.exp3}. The density
concentrates in the interior of the domain and the peak travels to the boundary.
At time $t=1$, the peak is close to the boundary which is reached later at $t=2.5$
(not shown). A similar behavior was already mentioned in \cite{BeJu13} for the
parabolic-elliptic model using a single-bump initial datum.

\begin{figure}[ht]
\includegraphics[width=80mm]{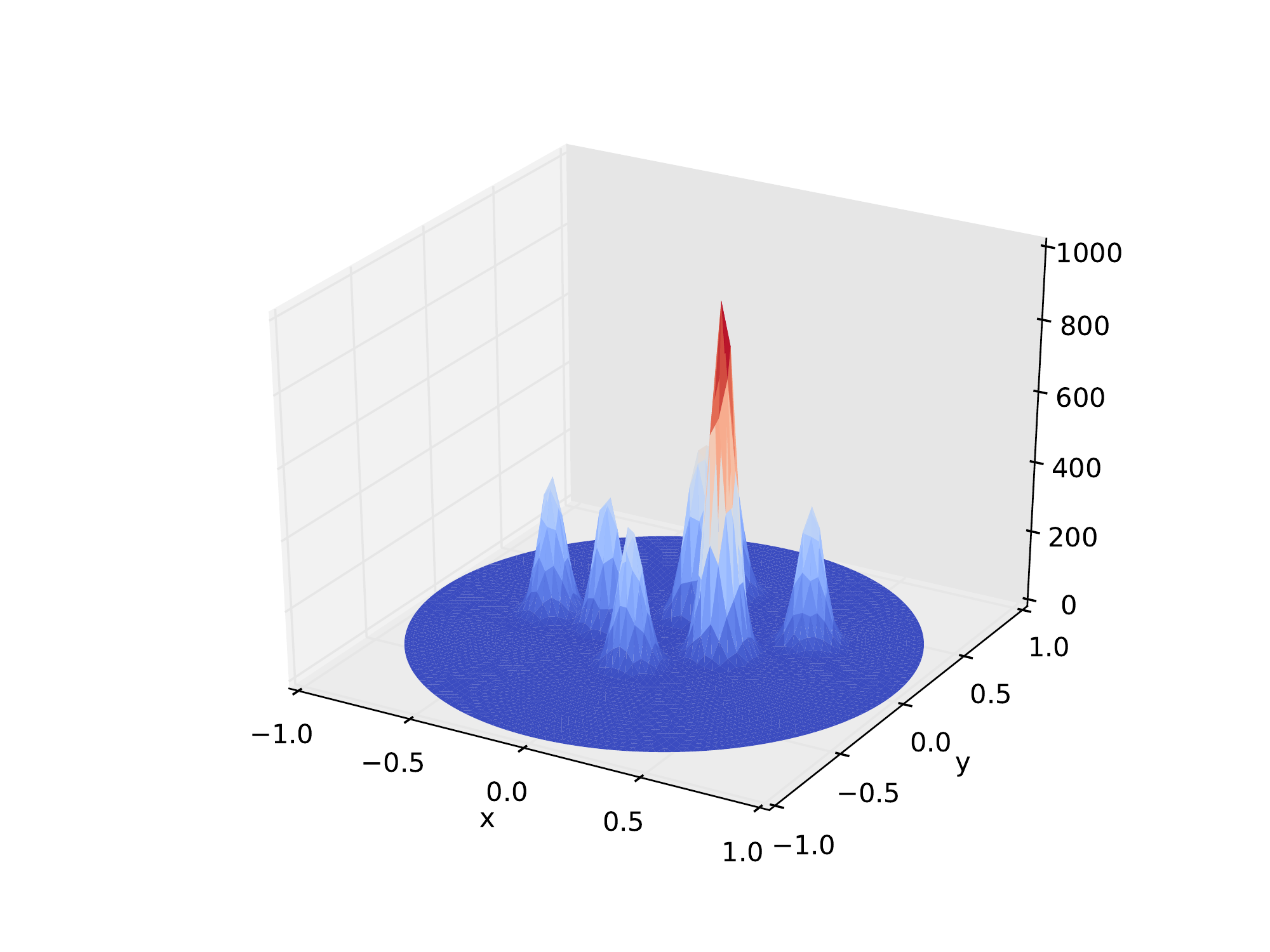}
\includegraphics[width=80mm]{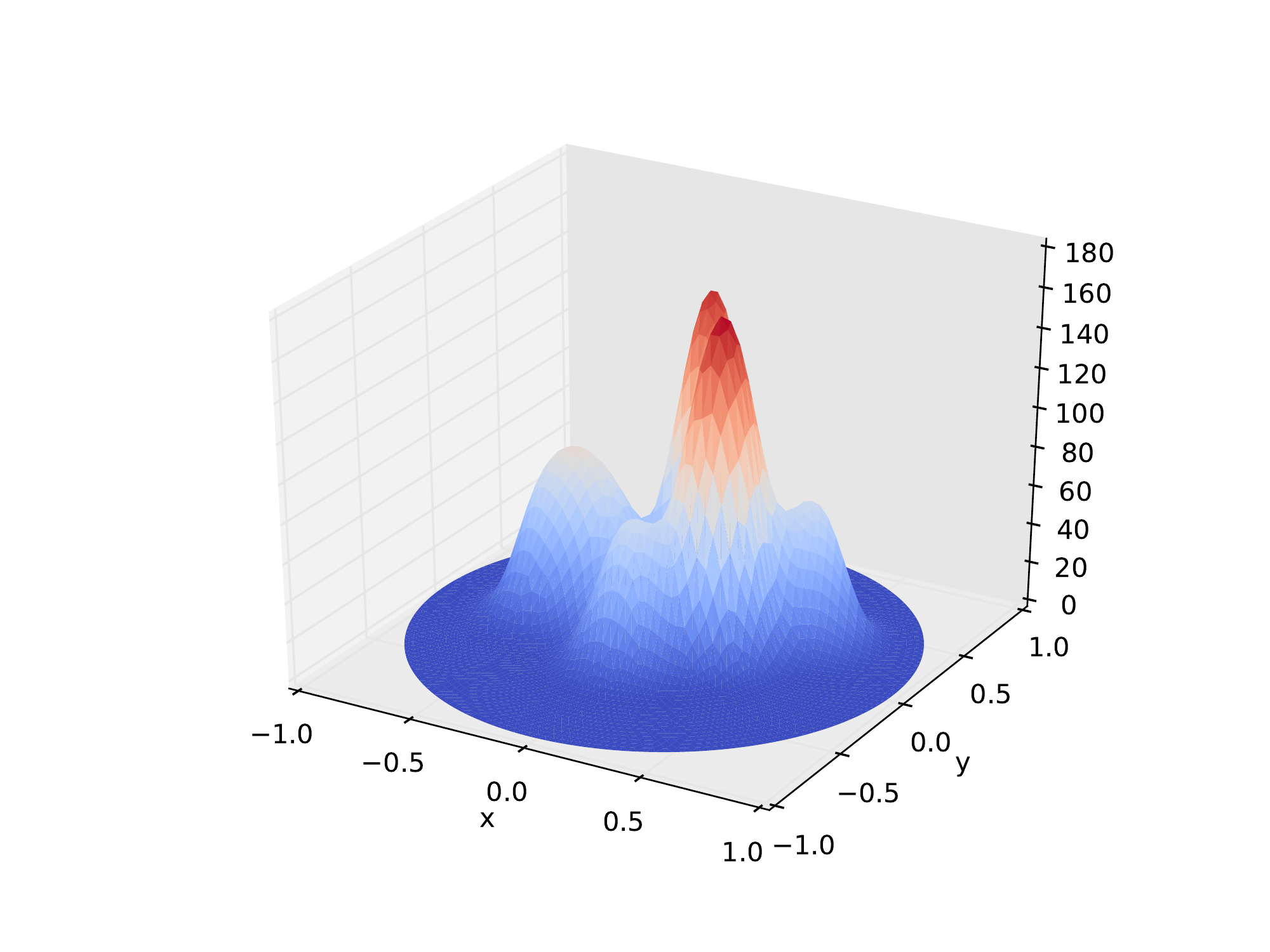}
\includegraphics[width=80mm]{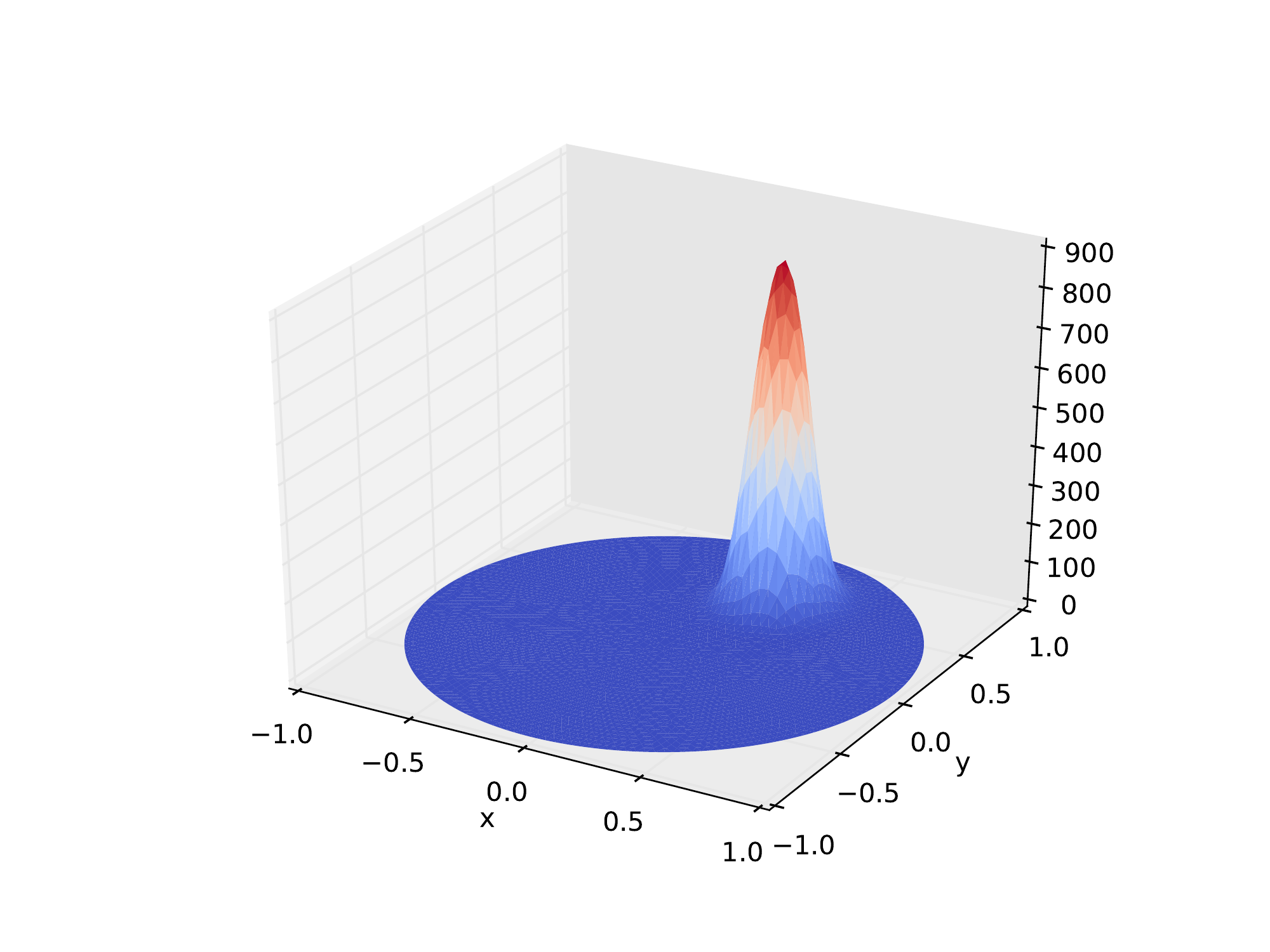}
\includegraphics[width=75mm]{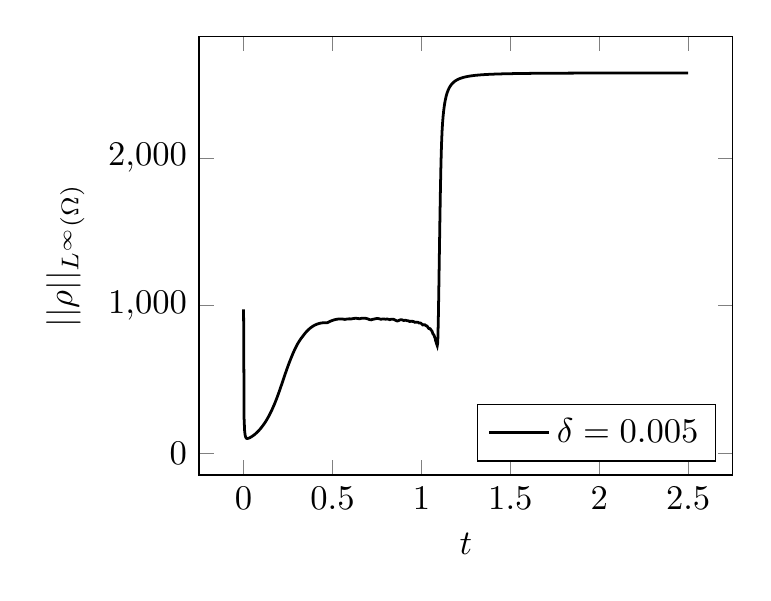}
\caption{Cell density with $\alpha=1$ and $\delta=5\cdot 10^{-3}$ at
times $t=0$ (top left), $t=5\cdot 10^{-3}$ (top right), $t=1$
(bottom left). The $L^\infty$ norm of the density is shown in the bottom right panel.}
\label{fig.exp3}
\end{figure}

{\em Experiment 4: Shape of peaks.}
The previous experiments show that the shape of the peaks depends on the value
of $\delta$. In this experiment, we explore this dependence in more detail.
We claim that the diameter and the height of the bump can be controlled by $\delta$.
We choose $\alpha=1$ and the initial datum $\rho^0=W_{0,0,20\pi}$ with $\theta=1/400$
and $c^0=0$. Furthermore, we prescribe homogeneous Dirichlet boundary
conditions for $c$ to avoid that the aggregated bump of cells moves to the boundary.
Figure \ref{fig.exp4} (top row) shows the stationary cell densities for two values of
$\delta$. As expected, the maximal diameter of the peak (defined at height $10^{-2}$) 
becomes smaller and the maximum of the peak becomes larger
for decreasing values of $\delta$. The level sets show that the solutions
are almost radially symmetric and the level set for $\rho=10^{-2}$ is approximately
a circle. This behavior is quantified in Figure
\ref{fig.exp4} (bottom row). We observe that the radius depends on $\delta$
approximately as $r\sim \delta^{0.43}$ and the height approximately as
$\rho_{\rm max}\sim \delta^{-1.00}$.

We remark that under no-flux boundary conditions for the chemical concentration,
the same behavior of the bumps can be observed for intermediate times. However,
the bump will eventually move to the boundary (as in Figure \ref{fig.exp3}),
since the chemical substance is not absorbed by the boundary as in the
Dirichlet case.

\begin{figure}[ht]
\includegraphics[width=80mm]{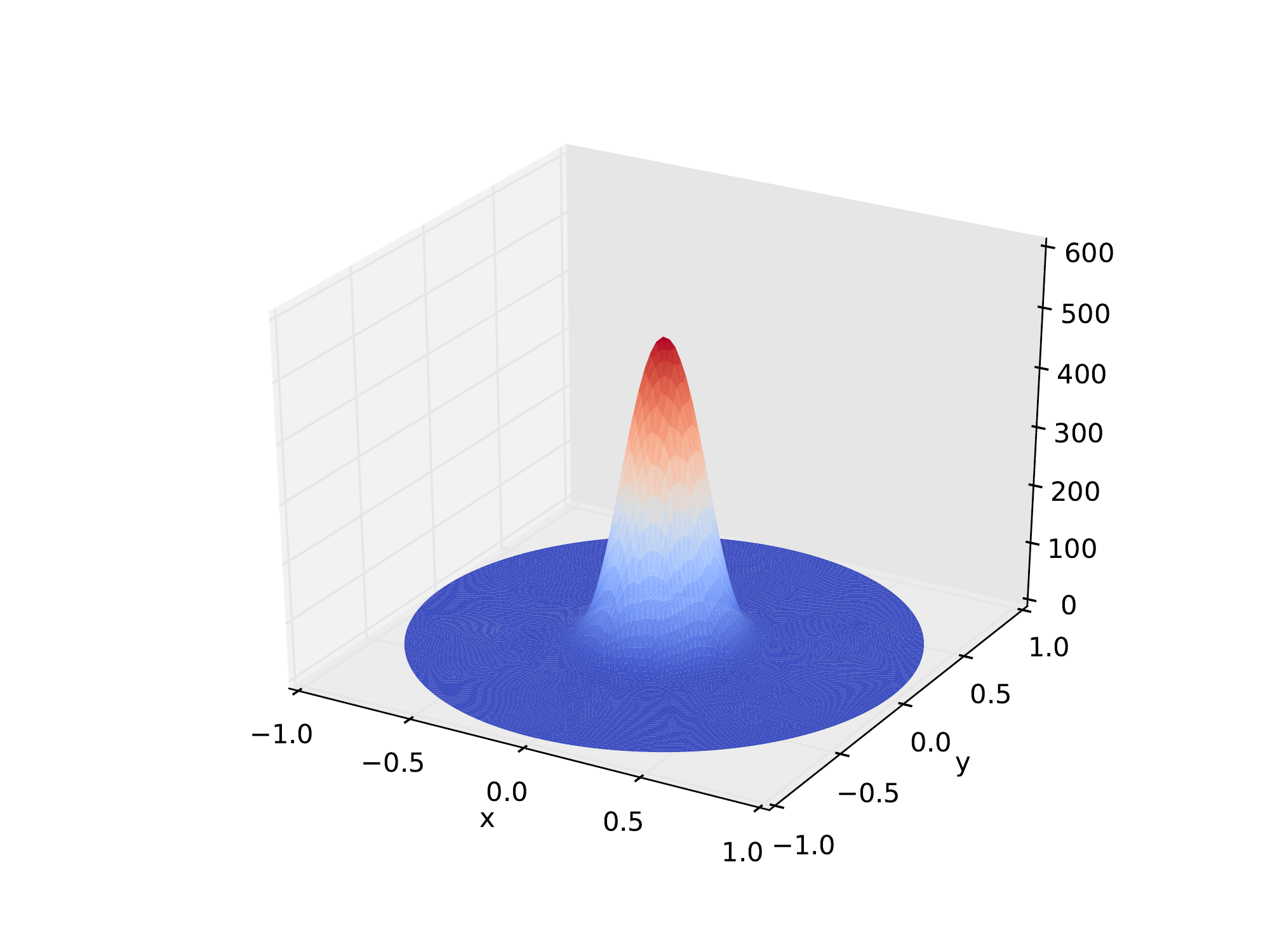}
\includegraphics[width=80mm]{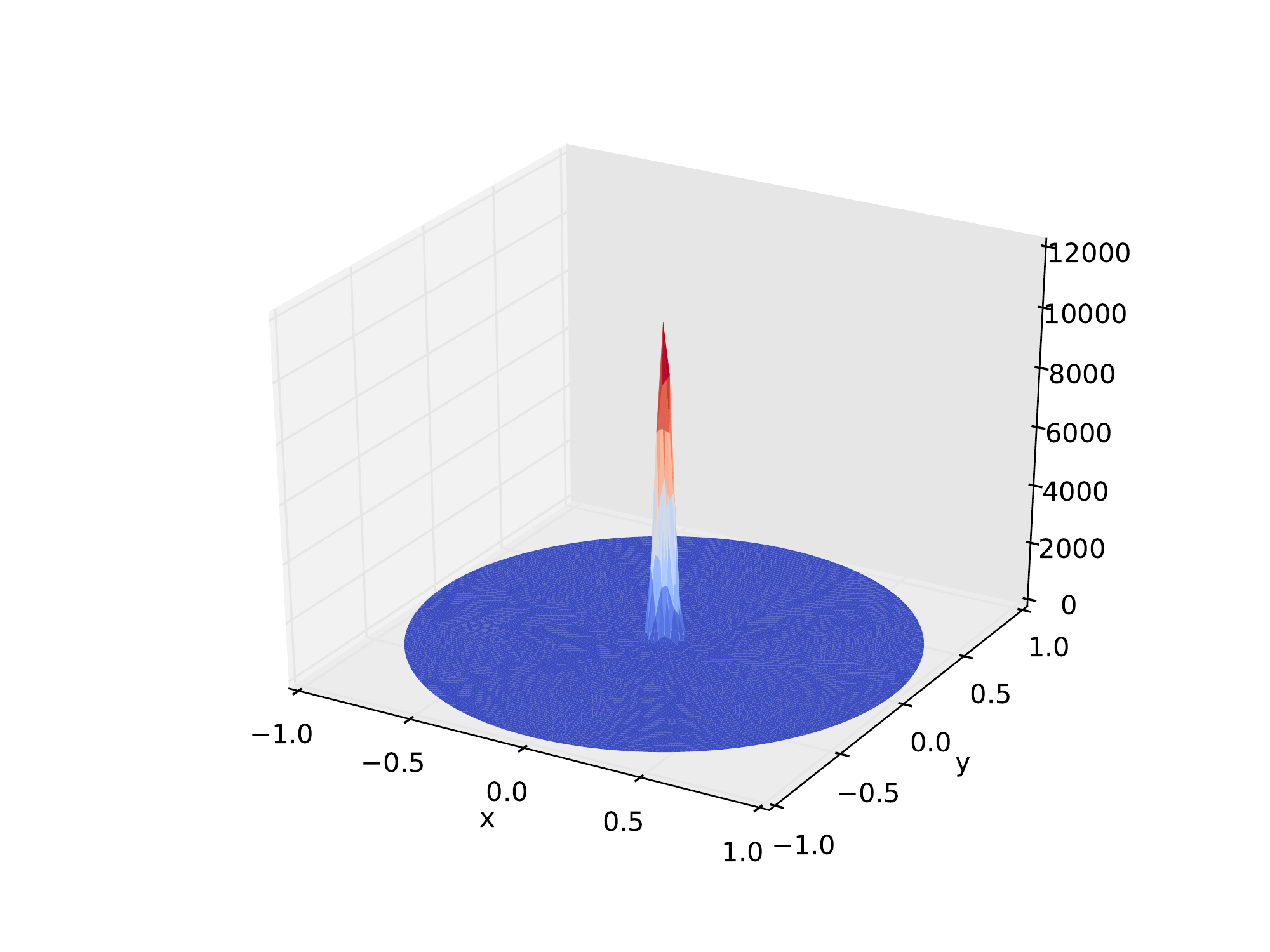}
\includegraphics[width=75mm]{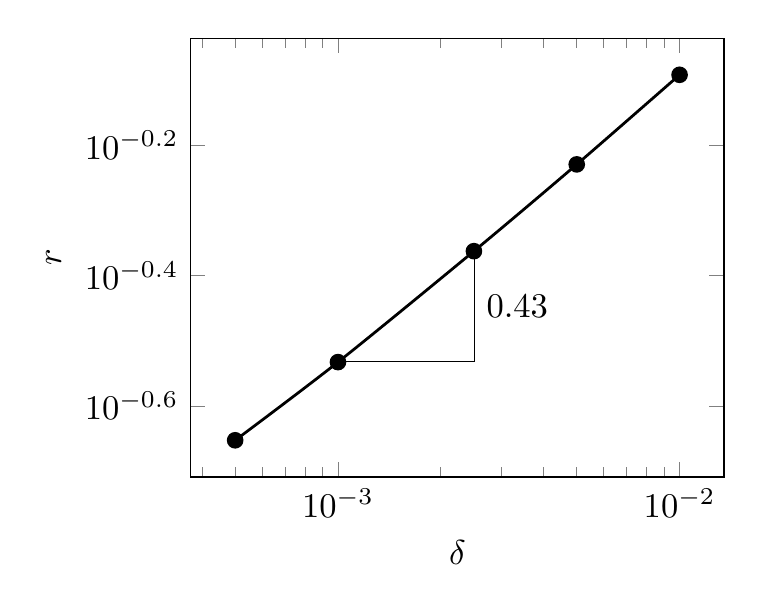}
\includegraphics[width=75mm]{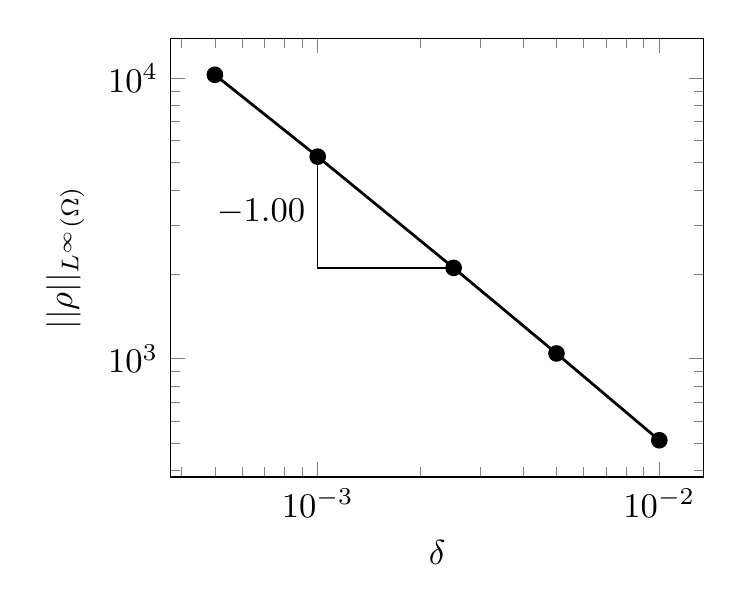}
\caption{Cell density at time $t=5$ (stationary case) with $\alpha=1$ 
and $\delta=10^{-2}$ (top left), $\delta=5\cdot 10^{-4}$ (top right).
Log-log plots of the radius of the density level set 
$\rho=10^{-2}$ versus $\delta$ (bottom left) and
of the maximum of $\rho$ versus $\delta$ (bottom right).}
\label{fig.exp4}
\end{figure}


\begin{appendix}
\section{Some technical tools}\label{sec.tech}

For the convenience of the reader, we collect some technical results.

\begin{lemma}[Inequalities]\label{lem.ineq}
Let $d\le 3$, $\Omega\subset\R^d$ be a bounded domain, and $\pa\Omega\in C^{2,1}$. 
There exists a constant $C>0$ such that for all $u$, $v\in H^1(\Omega)$,
\begin{equation}\label{a.L2}
  \|uv\|_{L^2(\Omega)} \le C\|u\|_{H^1(\Omega)}\|v\|_{H^1(\Omega)},
\end{equation}
for all $u\in H^2(\Omega)$ with $\na u\cdot\nu=0$ on $\pa\Omega$,
\begin{equation}\label{a.H2}
  \|u\|_{H^2(\Omega)}^2 \le C\big(\|\Delta u\|_{L^2(\Omega)}^2
	+\|u\|_{L^2(\Omega)}^2\big),
\end{equation}
and for all $u\in H^3(\Omega)$ with $\na u\cdot\nu=0$ on $\pa\Omega$,
\begin{equation}\label{a.H3}
  \|u\|_{H^3(\Omega)}^2 \le C\big(\|\na\Delta u\|_{L^2(\Omega)}^2
	+\|u\|_{H^2(\Omega)}^2\big).
\end{equation}
\end{lemma}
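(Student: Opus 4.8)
The plan is to establish the three estimates separately; each of them is a consequence of a standard Sobolev embedding or of elliptic regularity for the Neumann Laplacian, so no new ideas are required. For \eqref{a.L2}, I would invoke the Sobolev embedding $H^1(\Omega)\hookrightarrow L^4(\Omega)$, which holds on a bounded domain with Lipschitz boundary whenever $d\le 4$, and in particular for $d\le 3$; thus there is $C>0$ with $\|w\|_{L^4(\Omega)}\le C\|w\|_{H^1(\Omega)}$ for all $w\in H^1(\Omega)$. The Cauchy--Schwarz inequality then gives $\|uv\|_{L^2(\Omega)}\le\|u\|_{L^4(\Omega)}\|v\|_{L^4(\Omega)}\le C\|u\|_{H^1(\Omega)}\|v\|_{H^1(\Omega)}$, which is \eqref{a.L2}.

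For \eqref{a.H2}, I would set $f:=-\Delta u\in L^2(\Omega)$, so that $u$ solves the Neumann problem $-\Delta u=f$ in $\Omega$ with $\na u\cdot\nu=0$ on $\pa\Omega$. Since $\pa\Omega\in C^{2,1}\subset C^{1,1}$, the $H^2$ elliptic regularity estimate for the Neumann Laplacian (see, e.g., \cite{Gri85}) yields $\|u\|_{H^2(\Omega)}\le C(\|f\|_{L^2(\Omega)}+\|u\|_{L^2(\Omega)})$, and squaring gives \eqref{a.H2}. If desired, the $H^1$ part can alternatively be obtained by hand from $\|\na u\|_{L^2(\Omega)}^2=-\int_\Omega u\,\Delta u\,dx\le\frac12\|u\|_{L^2(\Omega)}^2+\frac12\|\Delta u\|_{L^2(\Omega)}^2$, the boundary term vanishing by the no-flux condition, so that only the second-derivative bound is borrowed from elliptic regularity.

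For \eqref{a.H3}, I would again regard $u$ as the solution of $-\Delta u=f$ in $\Omega$ with $\na u\cdot\nu=0$ on $\pa\Omega$, now observing that $f=-\Delta u\in H^1(\Omega)$ because $\na\Delta u\in L^2(\Omega)$. Here the assumption $\pa\Omega\in C^{2,1}$ is used in an essential way: the $H^3$ elliptic regularity estimate for the Neumann problem with homogeneous boundary data gives $\|u\|_{H^3(\Omega)}\le C(\|f\|_{H^1(\Omega)}+\|u\|_{L^2(\Omega)})$. Combining this with $\|f\|_{H^1(\Omega)}^2\le C(\|\na\Delta u\|_{L^2(\Omega)}^2+\|\Delta u\|_{L^2(\Omega)}^2)\le C(\|\na\Delta u\|_{L^2(\Omega)}^2+\|u\|_{H^2(\Omega)}^2)$ yields \eqref{a.H3}.

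The computations are entirely routine, so there is no real obstacle; the only point that deserves care is the bookkeeping of boundary regularity, namely that $C^{1,1}$ already suffices for \eqref{a.H2} while \eqref{a.H3} genuinely requires the full $C^{2,1}$ smoothness hypothesized in the lemma, together with checking that the homogeneous Neumann condition $\na u\cdot\nu=0$ is compatible with applying the elliptic regularity theory up to order three.
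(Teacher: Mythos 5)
Your proposal is correct and follows essentially the same route as the paper: \eqref{a.L2} via the embedding $H^1(\Omega)\hookrightarrow L^4(\Omega)$ and Cauchy--Schwarz, and \eqref{a.H2}, \eqref{a.H3} via $H^2$ and $H^3$ elliptic regularity for the homogeneous Neumann problem (the paper simply cites Grisvard and Troianiello for these two estimates). Your additional remarks on which boundary regularity is needed where are consistent with those references.
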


Inequality \eqref{a.L2} follows after applying the Cauchy--Schwarz inequality
and then the continuous embedding $H^1(\Omega)\hookrightarrow L^4(\Omega)$; 
\eqref{a.H2} is proved
in \cite[Theorem 2.3.3.6]{Gri85}, while \eqref{a.H3} is a consequence of
\cite[Theorem 2.24]{Tro87}.

\begin{lemma}[Nonlinear Gronwall inequality]\label{lem.gronwall}
Let $\delta>0$ and $\Gamma$, $G\in C^0([0,T])$ be nonnegative functions,
possibly depending on $\delta$, satisfying
\begin{align*}
  \Gamma(t) + C_0\int_0^t G(s)ds
	&\le C_1\Gamma(0) + C_2\int_0^t(\Gamma(s)+\Gamma(s)^\alpha)ds \\
	&\phantom{xx}{}
	+ C_3\delta^\beta\int_0^t(\Gamma(s)+\Gamma(s)^\gamma)G(s)ds + C_4\delta^\nu,
\end{align*}
where $\alpha>1$, $\beta\ge 0$, $\gamma>0$, $\nu>0$, and $C_0,\ldots,C_4>0$ 
are constants independent of $\delta$. Furthermore, let 
$\Gamma(0)\le C_5\delta^\nu$ for some $C_5>0$. Then there exists
$\delta_0>0$ such that for all $0<\delta<\delta_0$, $0\le t\le T$, and
$0<\eps<\nu$,
$$
  \Gamma(t) \le C_5\delta^{\nu-\eps}.
$$
\end{lemma}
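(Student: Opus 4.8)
The plan is to run a continuity (bootstrap) argument on $[0,T]$. Fix $\eps\in(0,\nu)$, and note that $\Gamma(0)\le C_5\delta^\nu<C_5\delta^{\nu-\eps}$ whenever $\delta<1$. Define
\[
  T^* := \sup\big\{t\in[0,T] : \Gamma(s)\le C_5\delta^{\nu-\eps}\ \text{for all } s\in[0,t]\big\},
\]
which is positive by continuity of $\Gamma$. The aim is to show that on $[0,T^*)$ one in fact has the strictly better bound $\Gamma(t)\le C_6\delta^\nu$ for a constant $C_6$ independent of $\delta$; this forces $T^*=T$ and yields the claim.

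On $[0,T^*)$ I would use the a priori bound $\Gamma\le C_5\delta^{\nu-\eps}$ in two places. First, because $\beta\ge0$, $\gamma>0$ and $\nu-\eps>0$, the quantity $C_3\delta^\beta(\Gamma+\Gamma^\gamma)\le C_3\delta^\beta\big(C_5\delta^{\nu-\eps}+C_5^\gamma\delta^{\gamma(\nu-\eps)}\big)$ tends to $0$ as $\delta\to0$, uniformly in $t$, so for $\delta$ small it is bounded by $C_0/2$ and the cross term is absorbed by the left-hand side:
\[
  C_3\delta^\beta\int_0^t(\Gamma+\Gamma^\gamma)G\,ds \le \tfrac{C_0}{2}\int_0^t G\,ds.
\]
Second, because $\alpha>1$, one has $\Gamma^\alpha\le(C_5\delta^{\nu-\eps})^{\alpha-1}\Gamma\le\Gamma$ for $\delta$ small, hence $C_2\int_0^t(\Gamma+\Gamma^\alpha)\,ds\le 2C_2\int_0^t\Gamma\,ds$. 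Subtracting $\tfrac{C_0}{2}\int_0^tG\,ds$, discarding the remaining nonnegative $G$-term, and using $\Gamma(0)\le C_5\delta^\nu$, the hypothesis collapses on $[0,T^*)$ to the purely linear inequality
\[
  \Gamma(t)\le (C_1C_5+C_4)\delta^\nu + 2C_2\int_0^t\Gamma(s)\,ds .
\]

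The classical Gronwall lemma then gives $\Gamma(t)\le C_6\delta^\nu$ on $[0,T^*)$ with $C_6:=(C_1C_5+C_4)e^{2C_2T}$. Now choose $\delta_0>0$ small enough that the two smallness conditions above hold and, in addition, $C_6\delta^\eps<C_5$ for all $\delta<\delta_0$ (possible since $\eps>0$ and $C_5,C_6$ are fixed); then $\Gamma(t)\le C_6\delta^\nu<C_5\delta^{\nu-\eps}$ strictly on $[0,T^*)$, and by continuity this strict inequality persists at $t=T^*$. If $T^*<T$ this would allow the interval to be enlarged, contradicting the definition of $T^*$; hence $T^*=T$, and $\Gamma(t)\le C_6\delta^\nu\le C_5\delta^{\nu-\eps}$ on all of $[0,T]$, which is the asserted estimate.

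The step needing the most care is the joint choice of $\delta_0$: one must check that the finitely many smallness requirements — absorbing the $(\Gamma+\Gamma^\gamma)G$ term into $C_0\int G$, ensuring $\Gamma^\alpha\le\Gamma$, and $C_6\delta^\eps<C_5$ — are simultaneously satisfiable, which holds because each involves a strictly positive power of $\delta$ with $\delta$-independent coefficients. The only other subtle point is the open/closed dichotomy in the continuity argument, namely that the strict improvement of the bound on $[0,T^*]$ rules out $T^*<T$; everything else reduces to the standard linear Gronwall estimate.
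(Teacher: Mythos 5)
Your proposal is correct and is essentially the paper's own proof in different clothing: the paper argues by contradiction via the first time $t_1$ at which $\Gamma(t_1)=C_5\delta^{\nu-\eps}$, while you run the equivalent continuity/bootstrap argument with $T^*=\sup\{t:\Gamma\le C_5\delta^{\nu-\eps}\text{ on }[0,t]\}$; the key steps (absorbing $C_3\delta^\beta(\Gamma+\Gamma^\gamma)G$ into $C_0\int G$, linearizing $\Gamma^\alpha\le(C_5\delta^{\nu-\eps})^{\alpha-1}\Gamma$, applying the linear Gronwall lemma, and gaining the factor $\delta^\eps$) are identical. The only caveat, shared with the paper's proof, is that the smallness condition $C_6\delta^\eps<C_5$ forces $\delta_0$ to depend on $\eps$, so the conclusion should be read as holding for each fixed $\eps\in(0,\nu)$ with $\delta_0=\delta_0(\eps)$.
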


\begin{proof}
A slightly simpler variant of the lemma was proved in \cite[Lemma 10]{HsWa06}.
Assume, by contradiction, that for all $\delta_0\in(0,1)$, there exist 
$\delta\in(0,\delta_0)$, $t_0\in[0,T]$, and $\eps\in(0,\nu)$ such that
$\Gamma(t_0)> C_5\delta^{\nu-\eps}$. Since $\Gamma(0)\le C_5\delta^{\nu}$ by
assumption and $\Gamma$ is continuous, there exists $t_1\in[0,t_0)$ such that
$\Gamma(t_1)=C_5\delta^{\nu-\eps}$ and $\Gamma(t)\le C_5\delta^{\nu-\eps}$ for
all $t\in[0,t_1]$. This leads for $t\in[0,t_1]$ to
\begin{align*}
  \Gamma(t) + C_0\int_0^t G(s)ds 
	&\le C_1C_5\delta^\nu
	+ C_2\big(1+(C_5\delta^{\nu-\eps})^{\alpha-1}\big)\int_0^t\Gamma(s)ds \\
	&\phantom{xx}{}
	+ C_3\delta^\beta\big(C_5\delta^{\nu-\eps} + (C_5\delta^{\nu-\eps})^\gamma\big)
	\int_0^t G(s)ds + C_4\delta^\nu.
\end{align*}
Since $\nu-\eps>0$, the integral over $G(s)$ on the right-hand side can
be absorbed for sufficiently small $\delta>0$ by the corresponding term on the
left-hand side. This implies that
$$
  \Gamma(t) \le (C_1C_5+C_4)\delta^\nu + 2C_2\int_0^t\Gamma(s)ds, \quad
	0\le t\le t_1.
$$
Then Gronwall's lemma gives, for sufficiently small $\delta_0>0$ and
$0<\delta<\delta_0$,
$$
  \Gamma(t) \le (C_1C_5+C_4)\delta^\nu e^{2C_2 T} \le \frac{C_5}{2}\delta^{\nu-\eps}
	< C_5\delta^{\nu-\eps}, \quad 0\le t\le t_1.
$$
which contradicts $\Gamma(t_1)=C_5\delta^{\nu-\eps}$.
\end{proof}

\end{appendix}


\end{document}